\newtheorem{theorem}{Theorem}[section]
\newtheorem{lemma}[theorem]{Lemma}
\newtheorem{corollary}[theorem]{Corollary}
\newtheorem{proposition}[theorem]{Proposition}
\newtheorem{remark}[theorem]{Remark}
\newtheorem{notation}[theorem]{Notation}
\newtheorem{question}[theorem]{Question}
\newcommand{\aut}{{\rm Aut\/}}
\newcommand{\PSL}{{\rm PSL\/}}
\newenvironment{proof}    {\par\noindent{\textit{Proof}\/: }\nopagebreak\normalsize}    {\hfill\linebreak[0]\hspace*{\fill} {\rule{1.5mm}{2mm}} \\[1pt]}
\begin{document}
\title{Fenchel's conjecture on NEC groups}
\author{E. Bujalance\thanks{Partially supported by PID2023-152822NB-I00}, F.J. Cirre\thanks{Partially supported by PID2023-152822NB-I00}, M.D.E. Conder\thanks{Partially supported by NZ Marsden Fund UOA2320}, A.F. Costa\thanks{Partially supported by PID2023-152822NB-I00}}
\maketitle
\begin{abstract}
A classical discovery known as Fenchel's conjecture and proved in the 1950s, 
shows that every co-compact Fuchsian group $F$ has a normal subgroup of finite index 
isomorphic to the fundamental group of a compact unbordered orientable surface, 
or in algebraic terms, 
that $F$ has a normal subgroup of finite index that contains no element of finite order other than the identity.
In this paper we initiate and make progress on an extension of Fenchel's conjecture by considering 
the following question: 
Does every planar non-Euclidean crystallographic group $\Gamma$ containing transformations that 
reverse orientation have a normal subgroup of finite index isomorphic to the fundamental group of a
compact unbordered non-orientable surface? 
We answer this question in the affirmative in the case where the orbit space of $\Gamma$ is a nonorientable 
surface, and also in the case where this orbit space is a bordered orientable surface of positive genus. 
In the case where the genus of the quotient is $0$, we have an affirmative answer in many subcases, 
but the question is still open for others. 
\end{abstract}

\section{Introduction}
\label{introduction}
Let $\Gamma$ be a co-compact crystallographic group $\Gamma$ of isometries of the hyperbolic
plane, also known as a non-Euclidean crystallographic group (or NEC group, for short). 
in this paper we investigate torsion-free normal subgroups of finite index in such a group $\Gamma $. 
Algebraically there are two different kinds of  torsion-free normal subgroups:  those uniformising 
an orientable Klein surface, and those uniformising a non-orientable surface.
We study the question of whether or not there exist such torsion-free normal subgroups of each kind.

Fuchsian groups can be considered as a particular type of NEC groups -- see \cite{wilkie}, in which 
canonical presentations of NEC groups were obtained in an analogous way to those for Fuchsian groups.
Hence the above question can be viewed as an extension of Fenchel's conjecture, 
which asserted that every co-compact Fuchsian group $F$ has a normal subgroup of finite index 
isomorphic to the fundamental group of a compact unbordered orientable surface,

Fenchel's conjecture was proved in complete generality in a sequence of papers
by J. Nielsen \cite{nielsen}, S. Bundgaard and J. Nielsen \cite{bundgaard-nielsen}, and R.H. Fox \cite{fox}, 
with a correction by T.C. Chau \cite{chau}; see also \cite{mennicke} for another complete proof by Mennicke. 

The classical algebraic version of the resulting theorem may be stated as follows:

\begin{theorem}[\protect\cite{nielsen,bundgaard-nielsen,fox}]
\label{Fenchel's.conjecture.classic}
Let $F$ be the finitely-presented group generated by $2g+r$
elements $a_{1},b_{1},\ldots ,a_{g},b_{g},$ $x_{1},\ldots ,x_{r},$
subject to the defining relations $x_{1}^{m_{1}}=\dots = x_{r}^{m_{r}} = 1$  and
$[a_{1},b_{1}]\ldots [a_{g},b_{g}]\,x_{1}\ldots x_{r}=1.$ 
Then $F$ has a normal subgroup of finite index that contains no element of finite order other than the identity.
\end{theorem}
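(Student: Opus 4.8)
The plan is to produce a torsion-free normal subgroup of finite index by exhibiting a homomorphism $\phi$ from $F$ onto a finite group $Q$ such that $\phi(x_i)$ has order exactly $m_i$ for every $i$. Granting such a $\phi$, set $N=\ker\phi$, a normal subgroup of finite index, and recall the standard description of torsion in a Fuchsian group with the given presentation: every element of finite order in $F$ is conjugate to some power $x_i^{\,k}$ with $1\le k\le m_i-1$ (the nontrivial stabilisers for the action of $F$ on the hyperbolic plane are the finite cyclic subgroups, the maximal ones being conjugate to the subgroups $\langle x_i\rangle$). So if $w\ne 1$ has finite order, writing $w=u\,x_i^{\,k}u^{-1}$ gives $\phi(w)=\phi(u)\,\phi(x_i)^{k}\,\phi(u)^{-1}\ne 1$ since $\phi(x_i)$ has order $m_i>k$; hence $w\notin N$ and $N$ is torsion-free, as required. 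Everything thus reduces to constructing $\phi$. (If the signature of $F$ is not hyperbolic then $F$ is finite or a Euclidean crystallographic group; in the latter case virtual torsion-freeness is Bieberbach's theorem, and in the former there is nothing to prove, so we assume from now on that $F$ is an infinite Fuchsian group.)

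The quickest route to $\phi$ is via residual finiteness. An infinite Fuchsian group is a finitely generated discrete subgroup of $\PSL(2,\mathbb R)$, hence a finitely generated linear group, hence residually finite by Mal'cev's theorem. The torsion to be controlled is represented, up to conjugacy, by the finitely many elements $x_i^{\,k}$ with $1\le k\le m_i-1$; for each such $w$ residual finiteness supplies a homomorphism $\psi_w\colon F\to G_w$ onto a finite group with $\psi_w(w)\ne 1$, and one takes $\phi$ to be the product $\prod_w\psi_w$. Then $\phi(x_i^{\,k})\ne 1$ for all $i$ and all $1\le k\le m_i-1$, while $x_i^{m_i}=1$ holds already in $F$; so $\phi(x_i)$ has order exactly $m_i$, and $\phi$ is as wanted.

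A more explicit and self-contained argument, in the spirit of the original proofs \cite{nielsen,bundgaard-nielsen,fox} and closer to what the NEC extensions in this paper require, proceeds as follows. It suffices to treat one generator at a time: if for each $i$ there is a homomorphism $\phi_i\colon F\to Q_i$ onto a finite group with $\phi_i(x_i)$ of order exactly $m_i$, then the diagonal map $\phi=(\phi_1,\dots,\phi_r)$ also has $\phi(x_i)$ of order exactly $m_i$ (the order is a multiple of $m_i$ from the $i$-th coordinate and divides $m_i$ because $x_i^{m_i}=1$). Fix $i$. If $g\ge 1$, one builds $\phi_i$ by hand: every positive integer $m$ is the order of a commutator in some finite group (for instance, in a suitable holomorph $\Z/m'\rtimes(\Z/m')^{\times}$ for the odd part $m'$ of $m$, together with a dihedral group for the $2$-part), so picking $A,B$ in a finite group $Q_i$ with $[A,B]$ of order $m_i$ and setting $a_1\mapsto A$, $b_1\mapsto B$, $x_i\mapsto [A,B]^{-1}$ and every other standard generator to $1$ yields a well-defined $\phi_i$ (the long relation becomes $[A,B]\,\phi_i(x_i)=1$) with $\phi_i(x_i)$ of order $m_i$.

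The remaining case $g=0$, where $F$ is the polygon group $\langle x_1,\dots,x_r\mid x_1^{m_1}=\dots=x_r^{m_r}=1,\ x_1\cdots x_r=1\rangle$ with $r\ge 3$, is the genuine difficulty. A naive abelian attempt --- mapping onto $\Z/m_i$ with $x_i\mapsto 1$ and distributing the remaining value among the other generators --- fails for number-theoretic reasons, so one is forced into nonabelian finite quotients: the plan is to construct a transitive permutation representation $\phi_i\colon F\to S_n$ (a branched cover of the sphere) in which each $x_j$ maps to a permutation of order dividing $m_j$, the product of the images is trivial, and $\phi_i(x_i)$ actually contains an $m_i$-cycle, with the supports chosen carefully as $n$ grows. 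For $r=3$ this is the classical realisability question for triangle groups, handled by explicit families such as suitable $\PSL(2,q)$, and the general polygon case follows by splitting off one generator at a time. Guaranteeing that such a quotient can always be arranged so that a prescribed elliptic generator keeps its \emph{full} order while the product relation still holds is the crux of this approach, and it is around here that Fox's original argument had a gap, later repaired by Chau \cite{chau}.
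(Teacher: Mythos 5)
The paper offers no proof of this theorem --- it is quoted as the classical algebraic form of Fenchel's conjecture and attributed to the cited works of Nielsen, Bundgaard--Nielsen and Fox (with Chau's correction) --- so there is nothing internal to compare against, and I will simply assess your argument. Your second paragraph is a correct and complete proof by the standard modern route: in the hyperbolic case realise $F$ as a Fuchsian group (Poincar\'e's polygon theorem), so that each $x_i$ has order exactly $m_i$ in $F$ and every nontrivial element of finite order is conjugate into some $\langle x_i\rangle$; then $F$ is a finitely generated linear group, hence residually finite by Mal'cev's theorem, and separating the finitely many elements $x_i^{\,k}$ ($1\le k<m_i$) in finite quotients and intersecting the kernels yields the desired torsion-free normal subgroup of finite index. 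The spherical and Euclidean degenerate cases are correctly dispatched (trivial subgroup, respectively the translation lattice). This route buys great brevity at the cost of importing two substantial external inputs (realisability of the signature by a discrete group, and residual finiteness of finitely generated linear groups), whereas the proofs the paper cites are combinatorial and self-contained. Be aware, however, that your last two paragraphs do not constitute a second, independent proof: the $g\ge1$ construction is fine, but for $g=0$ you only describe a strategy and explicitly concede that its key step --- arranging a finite quotient of the polygon group in which a prescribed $x_i$ retains its full order while the product relation holds --- is exactly the crux where Fox's argument had its gap. Since the Mal'cev argument already covers $g=0$, the proposal as a whole stands, but the ``explicit'' alternative should be labelled a sketch rather than a proof.
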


Note that each term $[a_{i},b_{i}]$ can stand for either $a_{i}b_{i}a_i^{-1}b_{i}^{-1}$ 
or $a_{i}^{-1}b_{i}^{-1}a_{i}b_{i}$ (by replacing $(a_i,b_i)$ by $(a_i^{-1},b_i^{-1})$ if necessary),  
but we will assume that a commutator $[u,v]$ is 
defined as $u^{-1}v^{-1}uv$ in the remainder of this paper. 

\smallskip

Note also that Theorem~\ref{Fenchel's.conjecture.classic} applies to groups that are isomorphic 
to spherical and Euclidean plane crystallographic groups, for which the conclusion is easy or well known.
Also by the Uniformisation Theorem, it has important consequences for the
study of groups of automorphisms of surfaces with geometrical structure.
In particular, every co-compact Fuchsian group is isomorphic to such a  finitely-presented group, 
and as a consequence, we have the following:

\begin{theorem}\label{Fenchel's.conjecture.fuchsian} 
Let $\Gamma $ be a co-compact Fuchsian group. 
Then $\Gamma $ has a normal subgroup $\Lambda$ of finite index
that contains no element of finite order other than the identity, and hence is isomorphic 
to the fundamental group of a compact surface (also known as a surface Fuchsian group). 
Equivalently, the given group $\Gamma $ occurs as an overgroup of a surface Fuchsian group 
$\Lambda$ that uniformises a compact Riemann surface $\mathcal{H}/\Lambda$, 
and this surface admits a finite group of automorphisms isomorphic to $\Gamma /\Lambda$.
\end{theorem}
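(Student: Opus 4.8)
The plan is to reduce the statement to the purely algebraic Theorem~\ref{Fenchel's.conjecture.classic} via the classical structure theorem for co-compact Fuchsian groups. First I would recall that a co-compact Fuchsian group $\Gamma$ admits a compact polygonal fundamental domain in $\mathcal{H}$, hence is finitely generated, and by the Poincar\'e--Fricke--Klein presentation theorem it has a presentation of precisely the form appearing in Theorem~\ref{Fenchel's.conjecture.classic}: generators $a_1,b_1,\dots,a_g,b_g,x_1,\dots,x_r$ subject to $x_1^{m_1}=\dots=x_r^{m_r}=1$ and $[a_1,b_1]\cdots[a_g,b_g]\,x_1\cdots x_r=1$, where $(g;m_1,\dots,m_r)$ is the signature of $\Gamma$. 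Applying Theorem~\ref{Fenchel's.conjecture.classic} to this presentation yields a normal subgroup $\Lambda$ of $\Gamma$ of finite index that contains no nonidentity element of finite order.

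Next I would identify $\Lambda$ geometrically. Since $[\Gamma:\Lambda]<\infty$ and $\Gamma$ is co-compact, $\Lambda$ is itself a co-compact Fuchsian group; being torsion-free it has no elliptic elements, and (like every Fuchsian group) no reflections, while co-compactness rules out parabolics, so every nonidentity element of $\Lambda$ is hyperbolic. Consequently $\Lambda$ acts freely and properly discontinuously on $\mathcal{H}$, so $\mathcal{H}/\Lambda$ is a compact, unbordered, orientable surface (necessarily of genus at least $2$) with $\pi_1(\mathcal{H}/\Lambda)\cong\Lambda$; in other words $\Lambda$ is isomorphic to the fundamental group of a compact surface, i.e. a surface Fuchsian group. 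Since the elements of $\Lambda$ are M\"obius transformations and hence holomorphic, the complex structure of $\mathcal{H}$ descends to $\mathcal{H}/\Lambda$, which by the Uniformisation Theorem is a compact Riemann surface uniformised by $\Lambda$.

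Finally I would exhibit the automorphism action. Because $\Gamma$ normalises $\Lambda$ and acts on $\mathcal{H}$ by orientation-preserving, hence conformal, isometries, the finite quotient $\Gamma/\Lambda$ acts on the Riemann surface $\mathcal{H}/\Lambda$ by conformal automorphisms via $\Lambda z\mapsto\Lambda\gamma z$. This action is faithful: if $\gamma\in\Gamma$ induces the identity map on $\mathcal{H}/\Lambda$, then $\gamma z\in\Lambda z$ for every $z\in\mathcal{H}$, and picking any $z\in\mathcal{H}$ fixed by no nonidentity element of $\Gamma$ (such $z$ exist, as $\Gamma$ is countable and each nonidentity element of $\Gamma$ fixes at most one point of $\mathcal{H}$) we find $\lambda\in\Lambda$ with $(\lambda^{-1}\gamma)z=z$, whence $\gamma=\lambda\in\Lambda$. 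Thus $\Gamma/\Lambda$ embeds into $\aut(\mathcal{H}/\Lambda)$, so $\Gamma$ is an overgroup of the surface Fuchsian group $\Lambda$ and $\mathcal{H}/\Lambda$ carries a finite group of automorphisms isomorphic to $\Gamma/\Lambda$. The only points needing care are the invocation of the presentation theorem (so that Theorem~\ref{Fenchel's.conjecture.classic} applies literally) and the faithfulness of the $\Gamma/\Lambda$-action; neither is a genuine obstacle, so the substance of the result lies entirely in Theorem~\ref{Fenchel's.conjecture.classic}.
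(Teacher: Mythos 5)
Your proposal is correct and follows essentially the same route as the paper, which derives Theorem~\ref{Fenchel's.conjecture.fuchsian} directly from Theorem~\ref{Fenchel's.conjecture.classic} by noting that every co-compact Fuchsian group has a presentation of exactly that form and then invoking the Uniformisation Theorem. The extra details you supply (torsion-freeness forcing a free action, and faithfulness of the induced $\Gamma/\Lambda$-action on $\mathcal{H}/\Lambda$) are the standard facts the paper leaves implicit, and they are handled correctly.
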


Many issues related to Fenchel's conjecture have been investigated by other authors.
For example, Edmonds, Ewing and Kulkarni used geometrical methods 
in \cite{edmonds-ewing-kulkarni:bulletin,edmonds-ewing-kulkarni:inventiones} to determine the 
possible finite indices of torsion-free subgroups of a Fuchsian group, with respect to its torsion.
Some of their findings were also proved by Burns and Solitar in \cite{burns-solitar}, 
using purely algebraic methods.
The same question about indices of torsion-free subgroups in the case of NEC groups was solved using 
algebraic and geometric arguments, by Izquierdo in \cite{izquierdo.thesis,izquierdo.procLondonMathSoc}. 
In the latter two articles, the author revealed some of the special difficulties in working with NEC groups.
Here we also point out that all the above authors left aside the matter of normality of the subgroups, 
which appears to require deeper consideration. 

\medskip

One interesting interpretation of Theorem \ref{Fenchel's.conjecture.fuchsian} in the context of orbifolds 
is the following (considered also in \cite{izquierdo.thesis,izquierdo.procLondonMathSoc}):  
Every 2-orbifold with hyperbolic structure, whose underlying topological space is a compact unbordered
orientable surface, admits a finite regular orbifold covering that is a 2-manifold. 
Note that the orbifold structure refers to the finiteness of \textit{local} isotropy groups, 
while Theorem \ref{Fenchel's.conjecture.fuchsian} gives a global information -- namely that if 
an orientable 2-orbifold is compact, then such an orbifold is the space of orbits of the (global) action 
of a finite group on some surface.

\medskip

The authors of  \cite{edmonds-ewing-kulkarni:annals} determined conditions for the existence of 
regular tessellations of type $\{p,q\}$ on unbordered surfaces. They studied the cases of orientable 
and non-orientable surfaces separately, since the algebraic structure of discrete groups with 
orientation-reversing elements requires somewhat different techniques to those consisting of orientation-preserving elements.
Note here that the concept of a regular tessellation considered in \cite{edmonds-ewing-kulkarni:annals} 
does not assume or imply the existence of a group of automorphisms acting transitively on faces and 
ordered edges of the tessellation, so it does not apply directly to the main topic of our current investigation.

\medskip

In this paper, we consider the generalisation of Fenchel's conjecture to the setting of \textit{proper NEC groups}, 
which are co-compact non-Euclidean crystallographic groups that are not Fuchsian groups. 
Such a generalisation has consequences for co-compact Klein surfaces, because NEC groups play the 
same role for Klein surfaces as Fuchsian groups play for Riemann surfaces.

\medskip

A first generalisation would be to ask whether every proper NEC group
has a torsion-free normal subgroup of finite index, without any other
condition(s) on the subgroup. It is not hard to see that the answer is
affirmative, as in Theorem \ref{orientable.surface.subgroup} below. This theorem
shows the existence of a surface \emph{Fuchsian} normal subgroup of finite
index, that is, a finite-index subgroup consisting of elements which preserve the
orientation of the hyperbolic plane. As a consequence, every NEC group
determines a finite group action on some compact Riemann surface, or in terms 
of orbifolds, every compact hyperbolic 2-orbifold is the orbit space of the
action of some finite group of isometries of an orientable hyperbolic surface.

The role played by torsion-free Fuchsian groups in the wider setting of NEC groups 
is played by \emph{surface NEC groups}, so called because they
uniformise hyperbolic compact Klein surfaces.
In contrast to the Fuchsian situation, however, a surface NEC group may contain torsion elements, 
which are necessarily hyperbolic reflections, of order $2$.

Indeed a first observation that can be made about the possible extension of Fenchel's conjecture 
to other types of NEC\ groups is that not every NEC group containing hyperbolic reflections 
has a bordered bordered surface NEC group -- that is, a normal \emph{surface} subgroup 
of finite index containing hyperbolic reflections; see  Theorem \ref{bordered.surface.subgroup} below. 
This theorem determines the NEC groups which occur as the orbifold fundamental group of a 
quotient of a bordered Klein surface by the action of some group of automorphisms.

Much more difficult is the extension of Fenchel's conjecture to NEC\ groups
that uniformise unbordered non-orientable Klein surfaces. Note that the algebraic
structure of non-orientable NEC\ groups is essentially different from the
structure of orientable ones, as will be seen in Section \ref{preliminaries}. 
A more specific question we will focus on is this: 
\textquotedblleft Does every proper NEC group have  a torsion-free normal subgroup (surface NEC group) 
of finite index containing elements that reverse orientation?\textquotedblright 

If the latter question has a positive answer, then a consequence would be that every NEC group determines a finite group 
action on some unbordered non-orientable Klein surface -- or equivalently, in terms of orbifolds, that 
every compact hyperbolic 2-orbifold whose underlying topological space is a compact non-orientable 
or bordered surface is the orbit space of the action of some finite group of isometries of a non-orientable hyperbolic unbordered compact surface.

In this paper we obtain a positive answer to the above question in many cases, by proving the following:

\begin{theorem}
Let $\Gamma $ be a proper NEC group such that 
$\mathcal{H}/\Gamma $ is not homeomorphic to a  bordered surface of topological genus zero.
Then $\Gamma $ has a torsion-free normal subgroup of finite index that contains orientation-reversing elements.
\end{theorem}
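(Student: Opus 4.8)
The plan is to reduce the statement to the construction of a single finite quotient of $\Gamma$ with two properties. Let $\sigma\colon \Gamma \to \Z/2$ be the homomorphism whose kernel $\Gamma^{+}$ is the index-two Fuchsian subgroup of orientation-preserving elements of $\Gamma$. I claim it is enough to produce a finite group $G$ and an epimorphism $\theta\colon\Gamma\to G$ that is (i) \emph{smooth}, meaning that its restriction to each maximal finite subgroup of $\Gamma$ is injective, and (ii) such that $\sigma$ does not factor through $\theta$, equivalently $\ker\theta\not\subseteq\Gamma^{+}$, equivalently $\theta|_{\Gamma^{+}}$ is still onto. Property (i) says that $\theta$ preserves the orders of the canonical elliptic generators $x_{i}$, of the reflections $c_{ij}$, and of the dihedral corner elements $c_{i,j-1}c_{ij}$ (and injects on the dihedral subgroups they generate); since every torsion element of $\Gamma$ is conjugate into one of these finite subgroups, (i) forces $\Lambda:=\ker\theta$ to be torsion-free, and it is of finite index because $G$ is finite. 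Property (ii) then exhibits an element of $\Lambda\setminus\Gamma^{+}$, i.e. an orientation-reversing isometry in $\Lambda$. So $\Lambda$ is the required subgroup, and the work is to build such a $\theta$; here the hypothesis on $\mathcal{H}/\Gamma$ enters through the shape of the canonical presentation of $\Gamma$. Recall that a proper NEC group has orbit space that is either non-orientable or bordered, so that, under our hypothesis, $\mathcal{H}/\Gamma$ is either non-orientable or a bordered orientable surface of positive genus, leaving exactly two cases.

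\emph{Case 1: $\mathcal{H}/\Gamma$ non-orientable.} Then the canonical presentation of $\Gamma$ has orientation-reversing glide-reflection surface generators $d_{1},\dots,d_{p}$ with $p\ge 1$, of infinite order, entering the long relator only through the block $d_{1}^{2}\cdots d_{p}^{2}$ and appearing in no other relator. The plan here is simply to kill $d_{1}$: put $\overline{\Gamma}:=\Gamma/\langle\!\langle d_{1}\rangle\!\rangle$ and let $q\colon\Gamma\to\overline{\Gamma}$ be the quotient map. By Tietze transformations $\overline{\Gamma}$ has the canonical presentation obtained by deleting $d_{1}$ and removing $d_{1}^{2}$ from the long relator; this is precisely the presentation of the crystallographic group with signature $(p-1;-;[m_{1},\dots,m_{r}];\{\,\dots\,\})$ when $p\ge 2$, and with $(0;+;[m_{1},\dots,m_{r}];\{\,\dots\,\})$ when $p=1$. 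In either case the canonical elliptic, corner and reflection elements of $\overline{\Gamma}$ retain their prescribed orders, so $q$ is already smooth on the torsion of $\Gamma$; and $d_{1}\in\ker q$ is orientation-reversing. Now compose $q$ with a smooth epimorphism of $\overline{\Gamma}$ onto a finite group, which exists by Theorem~\ref{orientable.surface.subgroup} (and, in the finitely many Euclidean or spherical cases that $\overline{\Gamma}$ may fall into, by the classical analogue for those crystallographic groups). The composite $\theta$ is smooth on $\Gamma$ and has $d_{1}$ in its kernel, so it satisfies (i) and (ii).

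\emph{Case 2: $\mathcal{H}/\Gamma$ orientable, bordered, of genus $g\ge 1$.} Now the canonical presentation contains a handle pair $a_{1},b_{1}$, occurring in no relator except the long one, and there only inside the commutator $[a_{1},b_{1}]$. The plan is to construct $\theta$ directly. One first chooses a finite group $G$ together with images of $x_{1},\dots,x_{r}$, of $e_{1},\dots,e_{k}$, of all the reflections $c_{ij}$, and of the remaining handle generators, so that: the $\theta(x_{i})$ and the dihedral subgroups $\langle\theta(c_{i,j-1}),\theta(c_{ij})\rangle$ carry the prescribed orders; the period-cycle link relations $c_{i,s_{i}}=e_{i}^{-1}c_{i0}e_{i}$ hold; and the images of the orientation-preserving generators generate $G$ (this secures (ii)). The existence of such a $G$ carrying the reflection and corner data imposed by the period cycles is supplied by (the proof of) Theorem~\ref{bordered.surface.subgroup}. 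Finally, one uses the freedom in $\theta(a_{1}),\theta(b_{1})$ to make the long relator hold: it demands that $[\theta(a_{1}),\theta(b_{1})]$ equal a specified element $w\in G$, and after enlarging $G$ if necessary — for instance by replacing it with a wreath product $G\wr S_{n}$ — one can arrange $w$ to be the commutator of a generating pair, thereby defining $\theta$ consistently while keeping (i) and (ii). The handle pair is exactly what renders the long relator harmless; this is the freedom that is unavailable when $g=0$.

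The main obstacle, and the reason the genus-zero bordered orientable case is excluded, is the tension between (i) and (ii). Smoothness forces every reflection $c_{ij}$, hence every conjugate of one, to survive in $G$, so the orientation-reversing element placed in $\ker\theta$ can never be a reflection: it must be a glide reflection — such as the generator $d_{1}$ in Case 1 — or an odd-length product of reflections along a period cycle, and maintaining surjectivity together with smoothness while arranging this consumes the extra freedom provided by a crosscap or by a handle. When $\mathcal{H}/\Gamma$ is an orientable bordered surface of genus $0$ there are no surface generators at all — only the elliptic $x_{i}$, the connecting $e_{j}$ and the reflections $c_{ij}$ — so there is nothing to push into the kernel, and whether the group defined by the long relator $x_{1}\cdots x_{r}e_{1}\cdots e_{k}$ has a smooth finite quotient not factoring through $\sigma$ becomes genuinely delicate: it is essentially the problem of existence of non-orientable regular tessellations studied in \cite{edmonds-ewing-kulkarni:annals}, which is why that case is only partially resolved.
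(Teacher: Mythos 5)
Your reduction to a smooth finite quotient $\theta$ with $\ker\theta\not\subseteq\Gamma^{+}$ is sound and is essentially the framework underlying all of the paper's constructions, but both of your cases have genuine gaps. In Case 1 the step that fails is the assertion that after killing $d_{1}$ ``the canonical elliptic, corner and reflection elements of $\overline{\Gamma}$ retain their prescribed orders.'' This is fine when $p\ge 2$, but when $p=1$ the reduced genus-zero presentation can be a \emph{non-realisable spherical} one, and then it collapses, destroying smoothness of $q$ itself. For example, $\Gamma$ with signature $(1;-;[2,3];\{-\})$ is a bona fide NEC group (with $\mu=1/6$), yet $\Gamma/\langle\!\langle d_{1}\rangle\!\rangle=\langle x_{1},x_{2}\mid x_{1}^{2}=x_{2}^{3}=x_{1}x_{2}=1\rangle$ is trivial, so $x_{1}\in\ker q$; similarly $(1;-;[-];\{(n)\})$ reduces to $\mathrm{C}_{2}$ with $c_{0}c_{1}\mapsto 1$, and $(1;-;[-];\{(n_{1},n_{2})\})$ with $n_{1}\ne n_{2}$ collapses the corner orders to $\gcd(n_1,n_2)$. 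Your parenthetical appeal to ``the classical analogue'' for spherical cases cannot repair this, because the damage is done by $q$, not by the subsequent quotient. The paper avoids it by \emph{not} killing $d_{1}$ in these subcases: for $r=2$ it sends $d_{1}$ to an element of order $3$ in an $(m_{1},m_{2},3)$-group (Theorem~\ref{theorem.unbordered.non-orientable}), and for $k=1$ it first passes to the index-two subgroup $\ker\psi$ of signature $(2g;-;\ldots)$ and then intersects with a conjugate via Lemma~\ref{technical.lemma} (Theorem~\ref{theorem.bordered.non-orientable.k=1}).

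In Case 2 there are two problems. First, Theorem~\ref{bordered.surface.subgroup} does not supply the finite group you want: it concerns bordered surface normal subgroups and holds only under a restrictive hypothesis on the period cycles. The relevant existence statement is that the genus-zero NEC group $\Gamma_{0}$ with signature $(0;+;[m_{1},\ldots,m_{r}];\{\mathcal{C}_{1},\ldots,\mathcal{C}_{k}\})$ has a torsion-free finite-index normal subgroup; but that group exists only when $\mu(\Gamma)>2g$, and the paper's proof of Theorem~\ref{theorem.non-orientable.g>0} has to treat the complementary range $\mu(\Gamma)\le 2g$ by thirteen explicit ad hoc homomorphisms. Second, your device for absorbing the long relator --- enlarging $G$ so that $w^{-1}$ becomes a commutator of a generating pair --- is exactly the crux and is only asserted: you would need (a) a proof that $w^{-1}$ becomes a commutator in a finite overgroup generated by the new pair together with the old images ($w$ need not even lie in the derived subgroup of $G$, so a genuine enlargement with control on generation is required), and (b) a verification that after enlargement the kernel still meets $\Gamma\setminus\Gamma^{+}$, since surjectivity of $\theta|_{\Gamma^{+}}$ onto the bigger group is not automatic. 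The paper sidesteps both issues by setting $\phi(a_{1})=\phi(c_{10})$ and $\phi(b_{1})=1$, so the commutator contributes trivially to the long relation and $a_{1}c_{10}$ is a visible orientation-reversing element of the kernel, at the cost of the case analysis mentioned above.
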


We also obtain a positive answer in other cases, such as when $\mathcal{H}/\Gamma $ is bordered and has genus zero.
For example, if $\mathcal{H}/\Gamma $ is an orbifold of genus zero and the numbers $k_0$ and $k_3$ 
of its boundary components with respectively no corner points or at least three corner points satisfy $k_{0}+k_{3}\geq 2$, 
then the orbifold $\mathcal{H}/\Gamma $ has a covering that is a non-orientable surface (see Theorem~\ref{theorem.non-orientable.g=0.k_0+k_3>1}). 

As an indication of the level of difficulty of finding an answer in some of the remaining cases, 
observations near the end of this paper will relate the question to a recent discovery about 
non-orientable regular maps with given hyperbolic type, and to the existence 
of non-orientable abstract regular polytopes with given Schl\"afli type.

\section{Preliminaries}
\label{preliminaries}

In this section we briefly describe some of the main facts about NEC groups that will be used in the paper.
For a general and more detailed account of this topic, we refer the reader to Section 0.2 in the book 
\cite{begg}. 

\medskip

First, an NEC group $\Gamma$ is a discrete subgroup of the group Isom$%
^{\pm}(\mathcal{H})$ of isometries of
the hyperbolic plane $\mathcal{H}$, preserving or reversing orientation, 
such that the orbit space $\mathcal{H}%
/\Gamma$ is compact. Any such group admits a presentation with up to four kinds of 
generators: 

\begin{itemize}
\item[$\bullet$] elliptic elements $\,x_i,\,$ for $1 \le i \le r$,

\item[$\bullet$] reflections $\,c_{i0}, \ldots , c_{is_i},\,$ for $1 \le i \le k$, 

\item[$\bullet$] orientation-preserving elements $\,e_i,\,$ for $1 \le i \le k$, \ and 

\item[$\bullet$] either hyperbolic elements $\,a_i$ and $b_i,\,$ 
or glide reflections $\,d_i,\,$ for $1 \le i \le g\,$;    
\end{itemize}

\noindent 
and these are subject to the defining relations below:

\begin{itemize}
\item[$\bullet$] ${x_i}^{\!m_i} = 1\,$ for $1 \leq i \leq r$, 

\item[$\bullet$] ${c_{ij-1}}^{\!2}={c_{ij}}^{\!2}=
(c_{ij-1}c_{ij})^{n_{ij}}=1\,$ for $1\leq j\leq s_i,$ for $1\leq i\leq k$,

\item[$\bullet$] $c_{is_i}=e_ic_{i0}{e_i}^{\!-1}\,$ for $1\leq i\leq k$, \ and 

\item[$\bullet$] $[a_1,b_1] \ldots [a_g,b_g]\, x_1 \ldots x_r \, e_1 \ldots e_k=1$ 
\ or  \ 
${d_1}^{\!2} \ldots {d_g}^{\!2\,}x_1 \ldots x_r \, e_1 \ldots e_k=1$. 
\end{itemize}

A set of such generators will be called a \emph{set of canonical generators} for $\Gamma$.

\medskip

A presentation of the above form can be encoded in the \emph{signature\/} of $\Gamma,$ as
introduced by Macbeath in \cite{macbeath}. This is a collection of symbols
and non-negative integers of the form
\begin{equation}  \label{signature}
\sigma(\Gamma) \ = \ \left(g;\,\pm;\, [m_1,\ldots,m_r];\,
\{(n_{11},\ldots,n_{1s_1}),\ldots, (n_{k1},\ldots,n_{ks_k})\}\right).
\end{equation}
The sign is ``$+$'' if there are $2g$ hyperbolic canonical generators $a_i$
and $b_i$, and ``$-$'' if there are $g$ glide reflection generators $d_i$. 
The integers $m_1,\ldots,m_r$ are called
\emph{proper periods\/}; each bracket $(n_{i1},\ldots,n_{is_i})$ is a \emph{%
period cycle\/}, and the integers $n_{ij}$ (all of which are assumed to be greater than $1$) 
are called \emph{link periods.}
An empty set of proper periods (with $r=0$) is denoted by $[-]$, 
while an empty period-cycle (with $s_i=0$) is denoted by $(-),$ 
and if there is no period cycle (with $k=0$), then the part of the signature following 
the proper periods is written as $\{-\}.$
For example, if an NEC group contains no element of finite order other than the identity, 
and hence contains no reflections and no elliptic elements, then it has
signature $(\gamma;\pm;[-];\{-\}).$

\medskip

The area of a fundamental region for an NEC group $\Gamma$ with signature (%
\ref{signature}) is $2\pi\mu(\Gamma)$, where
\begin{equation}  \label{area}
\mu(\Gamma) = \alpha g+k-2 +\sum_{i=1}^r\left(1-\frac{1}{m_i}\right)+ \frac{1%
}{2} \sum_{i=1}^k \sum_{j=1}^{s_i}\left(1-\frac{1}{n_{ij}} \right),
\end{equation}
with $\alpha=2$ if the sign is $+$, and $\alpha=1$ if the sign is $-$.
%
There exists an NEC group with the presentation encoded in signature \eqref{signature} if and only if
the expression given in \eqref{area} is positive.	

If $\Gamma^{\prime }$ is a subgroup of finite index $|\Gamma:\Gamma'|$ in $\Gamma$, then $%
\Gamma^{\prime }$ is also an \textrm{NEC} group, and its area is given by
the 
\emph{Riemann-Hurwitz formula\/}:
\begin{equation*}
\mu(\Gamma^{\prime }) = |\Gamma:\Gamma^{\prime }|\cdot\mu(\Gamma).
\end{equation*}

\medskip

The orientation-preserving elements of $\Gamma$ constitute the \emph{%
canonical Fuchsian subgroup} $\Gamma^+$. These are the elements expressible
as words in the canonical generators of $\Gamma$ containing an even number
of occurrences of the reflections $\,c_{ij}\,$ and glide reflections $\,d_i$ 
(and any number of the elliptic elements $\,x_i,\,$ and orientation-preserving elements $\,e_i.\,$) 
In particular, $\Gamma^+$ has index 1 or 2 in $\Gamma,$ and if the index is $1$
(or equivalently, $\Gamma = \Gamma^+$) then $\Gamma$ is a Fuchsian group,
while if the index is 2 then $\Gamma$ is a proper NEC group (as defined in the Introduction).

\medskip

Returning to our general question (namely about whether a proper NEC group
$\Gamma$ possess a torsion-free normal subgroup of finite index), we will make certain 
assumptions about the subgroup, in order to obtain information applicable to
different kinds of surfaces.

\medskip

A surface NEC group $\Lambda $
has signature $(\gamma ;\pm ;[-];\{(-),\overset{k}{\ldots },(-)\}).$ Now if $k>0$
then $\Lambda $ contains reflections, so is not torsion-free, and the
orbit space $\mathcal{H}/\Lambda $ is a compact Klein surface with $k$
boundary components. In this case $\Lambda $ is called a bordered surface
NEC group.
Not every NEC group contains a bordered surface NEC group, as shown in \cite%
{buj-martinez}. In fact, the following theorem gives a complete answer to the
analogue of Fenchel's conjecture for bordered surface normal subgroups of
bordered NEC groups.

\begin{theorem}[\protect\cite{buj-martinez}]
\label{bordered.surface.subgroup} Let $\Gamma$ be an NEC group with
signature \eqref{signature} with $k>0.$ Then $\Gamma$ has a bordered
surface normal subgroup with finite index in $\Gamma$ if and only if the
signature of $\Gamma$ has either an empty period cycle, or a period cycle with two
consecutive link periods equal to $2.$
\end{theorem}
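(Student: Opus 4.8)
The plan is to recast everything in terms of finite quotients. A normal subgroup $\Lambda\trianglelefteq\Gamma$ of finite index is the kernel of an epimorphism $\theta\colon\Gamma\to G$ onto a finite group, and from the description of the torsion of an NEC group one checks that $\Lambda=\ker\theta$ is a bordered surface NEC group exactly when: (i) $\theta(x_i)$ has order $m_i$ for $1\le i\le r$; (ii) $\theta(c_{ij-1}c_{ij})$ has order $n_{ij}$ for all $i,j$; and (iii) $\theta(c_{pq})=1$ for at least one canonical reflection $c_{pq}$. Indeed, since the nontrivial elliptic elements of $\Gamma$ are precisely the conjugates of the nontrivial powers of the $x_i$ and of the $c_{ij-1}c_{ij}$, conditions (i)--(ii) say that $\ker\theta$ is torsion-free apart from reflections; and because each $n_{ij}>1$, (ii) also prevents $\theta(c_{ij-1})$ and $\theta(c_{ij})$ from being simultaneously trivial, so no period cycle of $\ker\theta$ can carry a link period, i.e. all its period cycles are empty. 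Finally (iii) makes $\ker\theta$ contain a reflection, hence bordered. (Conversely, if $\ker\theta$ is a bordered surface NEC group then (i)--(ii) hold for the same reason, and the reflection it contains, being $\Gamma$-conjugate to some canonical $c_{pq}$, yields (iii).) So the theorem reduces to deciding, in terms of \eqref{signature}, when such a $\theta$ exists.

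For the ``only if'' direction, let $\Lambda\trianglelefteq\Gamma$ be a bordered surface NEC group of finite index and $\theta\colon\Gamma\to G=\Gamma/\Lambda$ the quotient map. Then $\Lambda$ contains a reflection, and every reflection of $\Gamma$ is $\Gamma$-conjugate to a canonical one; by normality of $\Lambda$ we may take $c_{pq}\in\Lambda$. If the period cycle containing $c_{pq}$ is empty we are done. Otherwise $c_{pq}$ has a canonical reflection $c_{pq-1}$ adjacent to it in the cyclic order of that period cycle, and $D:=\langle c_{pq-1},c_{pq}\rangle$ is dihedral of order $2n_{pq}$. Here is the decisive point: $\Lambda\cap D$ is normal in $D$, it contains the reflection $c_{pq}$, and — since $\Lambda$ has no nontrivial elliptic element — it meets the rotation subgroup of $D$, of index $2$, trivially; hence $\Lambda\cap D=\langle c_{pq}\rangle$. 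But a subgroup of order $2$ generated by a reflection is normal in a dihedral group only when the rotation subgroup has order at most $2$, so $n_{pq}=2$. Running the same argument with the other neighbour $c_{pq+1}$ gives $n_{pq+1}=2$, and $n_{pq},n_{pq+1}$ are two consecutive link periods of the period cycle. Thus $\sigma(\Gamma)$ has either an empty period cycle or a period cycle with two consecutive link periods equal to $2$.

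For the ``if'' direction I would single out a distinguished canonical reflection to kill: the reflection $c_{p_0 0}$ of an empty period cycle $(p_0)$, or else the reflection $c_{p_0 q}$ lying between two consecutive link periods $n_{p_0 q}=n_{p_0 q+1}=2$. Passing to the quotient of $\Gamma$ obtained by setting this reflection equal to $1$, together with one auxiliary relation — $e_{p_0}^{M}=1$ in the first case, $(c_{p_0 q-1}c_{p_0 q+1})^{N}=1$ in the second — a sequence of Tietze transformations identifies this quotient $\Gamma^{*}$ with a cocompact NEC group whose signature is that of $\Gamma$ with the empty period cycle deleted and a proper period $M$ inserted (first case), or with the block $\ldots,n_{p_0 q-1},2,2,n_{p_0 q+2},\ldots$ of link periods replaced by $\ldots,n_{p_0 q-1},N,n_{p_0 q+2},\ldots$ (second case); taking $M$, respectively $N$, large keeps the area positive. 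Because the distinguished reflection sits between link periods $2$ (or between none at all), collapsing it does no damage: in $\Gamma^{*}$ the orders of the images of all the $x_i$ and $c_{ij-1}c_{ij}$ agree with those in $\Gamma$ — at the two suppressed spots the relevant product $c_{p_0 q-1}c_{p_0 q}$ becomes the single surviving reflection $c_{p_0 q-1}$, still of order $2=n_{p_0 q}$. Hence it suffices to find an epimorphism of $\Gamma^{*}$ onto a finite group preserving the orders of these torsion elements, and Theorem~\ref{orientable.surface.subgroup} applied to $\Gamma^{*}$ provides one (as the quotient by a torsion-free normal subgroup of finite index). Composing with $\Gamma\twoheadrightarrow\Gamma^{*}$ gives $\theta\colon\Gamma\to G$ satisfying (i)--(iii), and $\ker\theta$ is the desired bordered surface normal subgroup.

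The conceptual heart is the ``only if'' direction — the simple but decisive fact that a subgroup of order $2$ generated by a reflection cannot be normal in a dihedral group larger than the Klein four-group, which is exactly what pins the necessary condition to ``$2$'' rather than merely ``even''. In the ``if'' direction the main work is bookkeeping: checking that the Tietze reduction genuinely lands on an NEC signature, verifying that every unsuppressed link period survives with the correct order, and treating the degenerate period cycles (those with at most one link period, and the case $q\in\{0,s_{p_0}\}$) together with the situation where the modified signature has non-positive area and $\Gamma^{*}$ is isomorphic to a spherical or Euclidean crystallographic group, where one appeals to the easy instances of Theorem~\ref{Fenchel's.conjecture.classic} in place of Theorem~\ref{orientable.surface.subgroup}.
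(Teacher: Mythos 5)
This is Theorem~\ref{bordered.surface.subgroup}, which the paper does not prove: it is imported verbatim from \cite{buj-martinez}, so there is no in-paper argument to compare yours against. Judged on its own, your reconstruction is essentially correct and follows what is in substance the standard route. The reduction to conditions (i)--(iii) on a finite quotient is right (using that every torsion element of $\Gamma$ is conjugate into some $\langle x_i\rangle$, $\langle c_{ij-1},c_{ij}\rangle$ or $\langle c_{i0}\rangle$, and that every reflection of a dihedral group is conjugate to one of its two canonical generators), and the decisive step in the ``only if'' direction -- that $\Lambda\cap\langle c_{pq-1},c_{pq}\rangle=\langle c_{pq}\rangle$ must be normal in a dihedral group of order $2n_{pq}$, forcing $n_{pq}=2$ -- is exactly the right lever. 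The ``if'' direction via collapsing the distinguished reflection, repairing the signature with a large auxiliary period $M$ or $N$, and invoking Fenchel for the resulting NEC group is also sound.

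Two small points of tidying. First, the degenerate period cycle $(n)$ with a single link period belongs to your \emph{only if} analysis, not to the ``if''-direction bookkeeping, and it resolves as an impossibility rather than as an instance of the condition: if $s_p=1$ and $c_{p0}\in\Lambda$, then $c_{p1}=e_pc_{p0}e_p^{-1}\in\Lambda$ by normality, so $\Lambda\cap\langle c_{p0},c_{p1}\rangle$ contains the nontrivial elliptic element $c_{p0}c_{p1}$, contradicting torsion-freeness of the orientation-preserving part of $\Lambda$. (This matters: a cycle $(2)$ must \emph{not} be counted as ``two consecutive link periods equal to $2$'', and your argument as written would otherwise certify it.) Your own dihedral-intersection machinery delivers this contradiction; it just needs to be said. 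Second, your closing worry about $\Gamma^{*}$ having non-positive area is already disposed of by your choice of $M$ (resp.\ $N$) large, since $\mu(\Gamma^{*})=\mu(\Gamma)-1/M$ (resp.\ $\mu(\Gamma)-1/(2N)$); the only genuine alternative case is that $\Gamma^{*}$ may be Fuchsian rather than a proper NEC group, where Theorem~\ref{Fenchel's.conjecture.fuchsian} replaces Theorem~\ref{orientable.surface.subgroup}.
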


In what follows, we will consider \emph{unbordered} surface NEC groups, that
is, NEC groups with signature $(\gamma ;\pm ;[-];\{-\}),$ and consider
separately the cases of sign \textquotedblleft $+$\textquotedblright\ and
\textquotedblleft $-$\textquotedblright .

\medskip

The observations below will be helpful in the next sections.

\begin{lemma}
\label{intersection.finite.index} If $N_1,\ldots, N_s$ are normal subgroups
of a group $G$ which have finite index in $G,$ then their intersection $%
N_1\cap\cdots\cap N_s$ also has finite index in $G.$
\end{lemma}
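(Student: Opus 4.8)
The plan is to realise the intersection as the kernel of a natural homomorphism into a finite group. First I would form the diagonal map $\varphi\colon G \to (G/N_1)\times\cdots\times(G/N_s)$ given by $\varphi(g)=(gN_1,\ldots,gN_s)$; since each $N_i$ is normal, every quotient $G/N_i$ is a group and $\varphi$ is a homomorphism. Next I would check that $\ker\varphi = N_1\cap\cdots\cap N_s$, which is immediate since $\varphi(g)$ is trivial exactly when $g\in N_i$ for all $i$. By the first isomorphism theorem, $G/(N_1\cap\cdots\cap N_s)$ embeds into the group $(G/N_1)\times\cdots\times(G/N_s)$, which is finite because each $|G:N_i|$ is finite by hypothesis; hence $G/(N_1\cap\cdots\cap N_s)$ is itself finite, giving $|G:N_1\cap\cdots\cap N_s| \le \prod_{i=1}^{s}|G:N_i| < \infty$.

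An equivalent route is induction on $s$: the case $s=1$ is trivial, and for the inductive step one sets $N = N_1\cap\cdots\cap N_{s-1}$, which has finite index by the inductive hypothesis, and then applies the second isomorphism theorem to write $|G:N\cap N_s| = |G:N|\cdot|N:N\cap N_s|$ with $|N:N\cap N_s| = |NN_s:N_s| \le |G:N_s|$, both factors being finite.

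There is no serious obstacle here: the statement is elementary, and the only point needing a moment's attention is the (routine) identification of $\ker\varphi$ with the intersection. It is worth remarking in passing that normality of the $N_i$ is used only to ensure the quotients are groups; the finiteness of the index of a finite intersection of finite-index subgroups in fact holds without any normality assumption, so the hypothesis of normality in the statement is stronger than needed for this conclusion (though it is the form that will be used in what follows).
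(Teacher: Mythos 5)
Your proof is correct and follows essentially the same route as the paper: both realise $N_1\cap\cdots\cap N_s$ as the kernel of the diagonal homomorphism $G\to (G/N_1)\times\cdots\times(G/N_s)$ and conclude from the finiteness of the image. The inductive alternative and the observation that normality is not needed for finiteness of the index are fine but additional to what the paper does.
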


\begin{proof}
The intersection $N_1\cap\cdots\cap N_s$ is the kernel $N$ of the natural
homomorphism $f$ from $G$ to the direct product of the factor groups $G/N_i$%
, and then since the image of $f$ is finite, this normal subgroup $N$ has finite index in $G.$
\end{proof}

An easy consequence is the following.

\begin{lemma}
\label{technical.lemma} Let $H$ and $K$ be subgroups of finite index in a
group $G$ with $K\triangleleft H$ and $|G:H|=2.$ 
If $\{1,c\}$ is a transversal for $H$ in $G$, so that $G= H \cup Hc$ (and $c^2 \in H$), 
then $K\cap c^{-1}Kc$ is a normal subgroup of $G$ which also has finite index in $G.$
\end{lemma}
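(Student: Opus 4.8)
The plan is to reduce everything to Lemma~\ref{intersection.finite.index}, but applied \emph{inside $H$} rather than inside $G$. First I would record the elementary fact that a subgroup of index $2$ is normal, so $H \triangleleft G$; in particular $cHc^{-1}=H$, which is what makes conjugation by $c$ interact cleanly with $H$. Next, since $K$ has finite index in $G$ and $|G:K| = |G:H|\,|H:K|$, the subgroup $K$ has finite index in $H$ as well, and the same holds for the conjugate $c^{-1}Kc$.

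The first substantive step is to show that $c^{-1}Kc$ is normal \emph{in $H$}. Given $h\in H$, the element $h' := chc^{-1}$ lies in $H$ because $H\triangleleft G$, so $ch = h'c$, and therefore
\[
h^{-1}(c^{-1}Kc)h \ = \ (ch)^{-1}K(ch) \ = \ (h'c)^{-1}K(h'c) \ = \ c^{-1}(h'^{-1}Kh')c \ = \ c^{-1}Kc ,
\]
the last equality using $K\triangleleft H$. Thus $K$ and $c^{-1}Kc$ are two normal subgroups of finite index in $H$, so Lemma~\ref{intersection.finite.index}, applied with $H$ in the role of $G$, shows that $N := K\cap c^{-1}Kc$ is normal of finite index in $H$. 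Since $|G:H|=2$, it follows that $N$ has finite index in $G$.

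It then remains to upgrade normality in $H$ to normality in $G$. For this I would observe that the normaliser $N_G(N)$ is a subgroup of $G$ containing $H$ (immediate from $N\triangleleft H$) and containing $c$: indeed $c^{-1}Nc = c^{-1}Kc\cap c^{-2}Kc^{2}$, and since $c^{2}\in H$ and $K\triangleleft H$ we have $c^{-2}Kc^{2}=K$, whence $c^{-1}Nc = c^{-1}Kc\cap K = N$. As $G = H\cup Hc = \langle H,c\rangle$, this gives $N_G(N)=G$, so $N\triangleleft G$, completing the proof.

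The only point that needs care — and the reason the argument is not a one-line citation — is that Lemma~\ref{intersection.finite.index} is stated for normal subgroups, so it cannot be applied directly to $K$ and $c^{-1}Kc$ viewed as subgroups of $G$, since these need not be normal in $G$. The crux is that both of them \emph{are} normal in $H$, which is where the lemma must be invoked; everything else is routine coset manipulation using only $H\triangleleft G$, $K\triangleleft H$ and $c^{2}\in H$.
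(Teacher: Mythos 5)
Your proof is correct and fills in exactly the details the paper leaves implicit when it calls the lemma ``an easy consequence'' of Lemma~\ref{intersection.finite.index}: you verify that $K$ and $c^{-1}Kc$ are both normal of finite index in $H$, apply that lemma inside $H$, and then check invariance under $c$ using $c^{2}\in H$. (Equivalently, since the $G$-conjugates of $K$ are precisely $K$ and $c^{-1}Kc$, your $N$ is the normal core of $K$ in $G$, which packages the same computation.)
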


Theorem \ref{Fenchel's.conjecture.fuchsian} and Lemma \ref{technical.lemma}
imply the existence of finite groups with prescribed partial presentations.
We display these presentations in the following two items as notation for future reference. 

\begin{notation}
\label{G[m_1,...,m_r],G(n_1,...,n_s)}
${}$\vskip 5pt

\noindent \emph{(a) 
Let $F$ be the group generated by $%
r\ge3$ elements $x_1,\ldots,x_r$ subject to the defining relations $%
x_1^{m_1}=\cdots=x_r^{m_r}=x_1\cdots x_r=1$. Then by Theorem \ref%
{Fenchel's.conjecture.fuchsian} with $g=0,$ 
we know that $F$ has a torsion-free normal subgroup $%
K $ of finite index in $F.$ 
Hence the quotient group $F/K$ is a finite group, which we will denote by $G_{[m_1,\ldots,m_r]}$, 
and which has the partial presentation below: 
\begin{equation}  \label{G[m_1,...,m_r]}
G_{[m_1,\ldots,m_r]} = \langle\, X_1,\ldots, X_r \ | \ X_1^{m_1}=\cdots
=X_r^{m_r}=X_1\cdots X_r=\cdots =1 \,\rangle.
\end{equation}
}

\noindent \emph{(b) 
Let $\Gamma$ be the group generated by $s\ge3$ involutions $%
c_0,\ldots,c_{s-1}$ subject to the defining relations $%
(c_{i-1}c_{i})^{n_i}=1 $ for $1\le i\le s-1$ and $(c_{s-1}c_0)^{n_s}=1.$
Then the subgroup $\Gamma^+$ generated by the products $x_i=c_{i-1}c_i$ for $%
1\le i\le s-1$ and $x_s:=c_{s-1}c_0$ is a normal subgroup of index $2$ in $%
\Gamma,$ and its generators $x_1,\ldots,x_s$ satisfy the defining relations $%
x_1^{n_1}=\cdots= x_s^{n_s}=x_1\cdots x_s=1.$ 
By~Theorem~\ref%
{Fenchel's.conjecture.fuchsian}, the subgroup $\Gamma^+$ has  a
torsion-free normal subgroup $K$ of finite index in $\Gamma^+,$ and then since $%
c_0\in\Gamma \,\setminus\, \Gamma^+,$ Lemma \ref{technical.lemma} implies that the
intersection $K\cap \,c_0^{-1}Kc_0$ is a torsion-free normal subgroup of finite index in $\Gamma$. 
Hence the quotient $\Gamma/(K\cap c_0^{-1}Kc_0)$ is a finite group, which we will denote by $%
G_{(n_1,\ldots,n_s)}$, and which has the partial presentation below: 
\begin{equation}  \label{G(n_1,...,n_s)}
G_{(n_1,...,n_s)} \!= \!\langle\, C_0,..., C_{s-1} \ | \ C_i^{\,2}\!=\!
(C_0C_1)^{n_1} \!= \! \cdot\cdot\cdot \!=\!(C_{s-2}C_{s-1})^{n_{s-1}} \!=\!
(C_{s-1}C_0)^{n_s} = \cdot\cdot\cdot =1\,\rangle.
\end{equation}
In particular, for notational convenience, in the special case where $s = 1$ we will sometimes 
denote the cyclic group of order $2$ by $%
G_{(-)}$, as this equals $\langle\, C_0 \ | \  C_0^{\,2}=1 \,\rangle$. 
}
\end{notation}

\section{Orientation-preserving subgroups of proper NEC\ groups}
\label{orientation.preserving.subgroup}

This case is easy:

\begin{theorem}
\label{orientable.surface.subgroup} 
Let $\Gamma$ be a proper NEC group with
signature $\eqref{signature}.$ Then $\Gamma$ has a torsion-free normal
subgroup of finite index consisting of orientation-preserving elements.
\end{theorem}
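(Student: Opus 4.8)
The plan is to build the desired torsion-free normal subgroup as an intersection of kernels of carefully chosen homomorphisms from $\Gamma$ onto finite groups, using the canonical presentation of $\Gamma$ encoded in signature \eqref{signature} together with the finite groups $G_{[m_1,\ldots,m_r]}$ and $G_{(n_1,\ldots,n_s)}$ introduced in Notation \ref{G[m_1,...,m_r],G(n_1,...,n_s)}. The first and most important observation is that it suffices to find a torsion-free \emph{normal} subgroup of finite index, because the torsion elements of an NEC group are precisely its elliptic elements, its reflections, and (possibly) certain products of consecutive reflections; since any finite-index subgroup of $\Gamma^+$ is torsion-free as soon as it is torsion-free, and since we want the subgroup to lie inside $\Gamma^+$ (to guarantee it consists of orientation-preserving elements), we will arrange the subgroup to be contained in $\Gamma^+$ automatically by intersecting with $\Gamma^+$ at the end. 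Concretely, $\Gamma$ has finitely many conjugacy classes of maximal finite cyclic subgroups, generated by the $x_i$ (of order $m_i$), the $c_{ij}$ (of order $2$), and the $c_{ij-1}c_{ij}$ (of order $n_{ij}$); it suffices to produce, for each such generator $t$, a normal subgroup $N_t$ of finite index in $\Gamma$ with $\langle t\rangle\cap N_t=\{1\}$, and then take $N=\Gamma^+\cap\bigcap_t N_t$, which has finite index by Lemma \ref{intersection.finite.index} and is torsion-free and orientation-preserving.

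The key steps are then as follows. First, for the elliptic generators: map $\Gamma$ onto a suitable finite quotient in which the images of $x_1,\ldots,x_r$ retain their orders and the image of the "long relation" still holds. Here one uses the group $G_{[m_1,\ldots,m_r,?]}$ obtained by applying Theorem \ref{Fenchel's.conjecture.fuchsian} to an auxiliary Fuchsian group whose proper periods include $m_1,\ldots,m_r$ together with possibly one extra period to absorb the contribution of the $e_i$'s and the "$\pm$" part of the relation, and then kill the generators $a_i,b_i$ or $d_i$ appropriately; alternatively one can kill the reflection generators $c_{ij}$ and the $e_i$ and reduce to a Fuchsian quotient directly. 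Second, for the reflection generators and the rotation generators $c_{ij-1}c_{ij}$: for each period cycle $(n_{i1},\ldots,n_{is_i})$, map that "dihedral piece" of $\Gamma$ onto the finite group $G_{(n_{i1},\ldots,n_{is_i})}$ of Notation \ref{G[m_1,...,m_r],G(n_1,...,n_s)}(b), sending all other canonical generators to the identity (which is consistent with the relation $c_{is_i}=e_ic_{i0}e_i^{-1}$ once $e_i\mapsto1$, and with the long relation once the elliptic and hyperbolic generators go to $1$); since $G_{(n_{i1},\ldots,n_{is_i})}$ has been constructed precisely so that the images $C_{ij}$ are genuine involutions and the $C_{ij-1}C_{ij}$ have order exactly $n_{ij}$, the kernel of this homomorphism meets $\langle c_{ij}\rangle$ and $\langle c_{ij-1}c_{ij}\rangle$ trivially. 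Taking the intersection of all these kernels with $\Gamma^+$ and invoking Lemma \ref{intersection.finite.index} yields $N$.

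The main obstacle is verifying that each proposed assignment of canonical generators actually \emph{extends} to a homomorphism, i.e.\ that all the defining relations of $\Gamma$ listed after \eqref{signature} are satisfied by the images; the delicate relations are the linking relations $c_{is_i}=e_ic_{i0}e_i^{-1}$ and the single long relation $[a_1,b_1]\cdots[a_g,b_g]\,x_1\cdots x_r\,e_1\cdots e_k=1$ (or the glide-reflection analogue). The remedy is to be uniform about which generators are sent to the identity in each homomorphism: when targeting $G_{(n_{i1},\ldots,n_{is_i})}$ we send \emph{every} generator outside the $i$-th period cycle to $1$, so both problematic relations collapse to relations that already hold in $G_{(n_{i1},\ldots,n_{is_i})}$ (the linking relation becomes $C_{is_i}=C_{i0}$, which we may need to accommodate by using the cyclic reindexing built into \eqref{G(n_1,...,n_s)}, reading $C_{s}$ as $C_0$); and when targeting a Fuchsian quotient to handle the elliptic generators we send all reflections $c_{ij}$ and all $e_i$ to $1$, so the first problematic relation is vacuous and the long relation becomes the standard Fuchsian relation among the $x_i$ together with images of the $a_i,b_i$ or $d_i$, which lands inside a group covered by Theorem \ref{Fenchel's.conjecture.fuchsian}. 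A small amount of care (handled case-by-case on the sign $\pm$ and on whether $g=0$) is needed to ensure that in the "$-$" case the glide reflections $d_i$ are sent to orientation-preserving images or are killed, but since our final intersection with $\Gamma^+$ discards the orientation-reversing part anyway, this does not cause real difficulty.
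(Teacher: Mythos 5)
Your overall strategy (intersect kernels of finitely many finite quotients, one designed to ``protect'' each canonical finite subgroup, then intersect with $\Gamma^+$) is workable in principle, but as written it has a genuine gap in the treatment of the period cycles. You propose to map the $i$-th dihedral piece onto $G_{(n_{i1},\ldots,n_{is_i})}$ while sending \emph{every} other canonical generator, in particular $e_i$, to the identity. The relation $c_{is_i}=e_ic_{i0}e_i^{-1}$ then forces $c_{is_i}$ and $c_{i0}$ to have the same image, and this is fatal when $s_i=1$ or $s_i=2$. For $s_i=1$ the element $c_{i0}c_{i1}$, which has order $n_{i1}>1$ in $\Gamma$, maps to $C_{i0}^{\,2}=1$, so the kernel contains a non-trivial torsion element. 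For $s_i=2$ with $n_{i1}\neq n_{i2}$ the images of $c_{i0}c_{i1}$ and $c_{i1}c_{i2}$ are forced to be mutually inverse, hence of equal order, so at least one of them drops order and again the kernel picks up torsion. (For $s_i=0$ or $s_i\geq 3$ your recipe is fine, which is why the cyclic reindexing in \eqref{G(n_1,...,n_s)} appears to save you; it does not in the short cycles.) The repair is exactly what the paper does later in the proof of Theorem~\ref{theorem.non-orientable.g=0.k_0+k_3>1}: for such cycles one must send $e_i$ to a non-trivial element (e.g.\ $c_{i0}\mapsto u$, $e_i\mapsto v$, $c_{i1}\mapsto vuv$ into $\mathrm{D}_{2n_{i1}}$ when $s_i=1$) and then compensate for this in the long relation by adjusting the image of some other generator. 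Your elliptic-generator step has a similar, milder vagueness when $r\leq 2$ (the ``extra period'' must actually be carried by some $e_i$ or $d_i$, which again disturbs the other relations), and you also invoke, without justification, the classification of the finite subgroups of an NEC group up to conjugacy; that fact is true and standard, but it deserves a citation.

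You should also be aware that this entire apparatus is unnecessary here: since the subgroup you want must consist of orientation-preserving elements, it must lie in the canonical Fuchsian subgroup $\Gamma^+$, which is itself a co-compact Fuchsian group of index $2$ in $\Gamma$. The paper simply applies Theorem~\ref{Fenchel's.conjecture.fuchsian} to $\Gamma^+$ to get a torsion-free normal subgroup $K$ of finite index in $\Gamma^+$, and then replaces $K$ by $K\cap c^{-1}Kc$ (Lemma~\ref{technical.lemma}, with $\{1,c\}$ a transversal for $\Gamma^+$ in $\Gamma$) to make it normal in $\Gamma$; this subgroup is torsion-free because it sits inside $K$, and orientation-preserving because it sits inside $\Gamma^+$. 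No analysis of reflections, period cycles or long relations is needed.
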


\begin{proof}
Applying Theorem \ref{Fenchel's.conjecture.fuchsian} to the canonical
Fuchsian subgroup $\Gamma^+$ of $\Gamma$ gives the existence of a normal
subgroup $K$ of finite index in $\Gamma^+$ containing no non-trivial element
of finite order, and then if we let $\{1,c\}$ be a transversal for $\Gamma^+$ in $\Gamma$, 
then by Lemma \ref{technical.lemma} the intersection $\Lambda:=K\cap c^{-1}Kc$ 
is also a normal subgroup of finite index in $\Gamma$ containing no non-trivial element 
of finite order, and being a subgroup of $\Gamma^+$, consists of orientation-preserving elements. 
\end{proof}

\section{Orientation-reversing subgroups of proper NEC\ groups}
\label{orientation.reversing.subgroup} 

In the rest of the paper, we address the question of existence of a torsion-free normal subgroup 
of finite index in a proper NEC group $\Gamma$, containing orientation-reversing elements.
Observe that these orientation-reversing elements must be glide reflections, 
as all hyperbolic reflections (the only other kind of orientation-reversing elements) have finite order.

We distinguish cases according to the sign in the signature of $\Gamma.$

\subsection{sign({\protect\boldmath$\Gamma)=``-$''} \ (with {\protect \boldmath$g>0)$}}\label{subsection.sign(Gamma)=-}

We first consider the case $k=0,$ where $\Gamma$ has no hyperbolic reflections, 
so there is no torsion element in $\Gamma\,\setminus\,\Gamma^+.$

\subsubsection{\protect\boldmath$\protect\sigma(\Gamma)=(g;-;[m_1,\ldots,m_r];\{-\})$}

\label{subsection.unbordered.non-orientable}

\begin{theorem}
\label{theorem.unbordered.non-orientable} Let $\Gamma$ be an NEC group with
signature $\sigma(\Gamma)=(g;-;[m_1,\ldots,m_r];\{-\}).$ Then $\Gamma$
has a torsion-free normal subgroup 
of finite index that contains orientation-reversing elements.
\end{theorem}


\begin{proof}
First, suppose that $r\ge3.$ In this case, we take a finite group $%
G=G_{[m_1,\ldots,m_r]}$ with partial presentation $\eqref{G[m_1,...,m_r]},$
and define $\phi:\Gamma\to G$ by setting $\phi(d_i)=1$ for $1\le i\le g$ and $%
\phi(x_i)=X_i$ for $1\le i\le r.$ Then this gives a homomorphism whose kernel is a torsion-free 
normal subgroup of finite index $|G|$ in $\Gamma$, and contains the orientation-reversing 
elements $d_i$ (for $1 \le i \le g$).

Next suppose that $r=2.$ Here we take a finite group $G=G_{[m_1,m_2,3]}$
generated by two elements $u$ and $v$ of orders $m_1$ and $m_2$ respectively
such that their product $uv$ has order $3$, and define $\phi:\Gamma\to G$ by setting $%
\phi(x_1)=u,$ $\phi(x_2)=v,$ $\phi(d_1)=uv$ and $\phi(d_i)=1$ for any 
remaining $d_i$ (with $i > 1$).  Then this gives a homomorphism whose kernel 
is a torsion-free normal subgroup of finite index $|G|$ in $\Gamma$, and contains the 
orientation-reversing element $d_1^{\,3}.$

Finally suppose that $r=1$, so that $g\ge2$ (in order to ensure that the area of a fundamental region 
for $\Gamma$ is positive). In this case take a finite group $G=G_{[3,3,m_1]}$ generated by two 
elements $u$ and $v$ of order $3$ such that their product $uv$ has order $m_1,$ 
and define $\phi:\Gamma\to G$ by setting $%
\phi(d_1)=u,$ $\phi(d_2)=v,$ $\phi(x_1)=vu$ and $\phi(d_i)=1$ for any $i > 2$.
Again this gives a homomorphism whose kernel is a torsion-free normal subgroup of finite index $|G|$ 
in $\Gamma$, and contains the orientation-reversing element $d_1^{\,3}.$
\end{proof}

Note that the above theorem can also be quickly deduced from 
\cite[Proposition 3.1]{izquierdo.procLondonMathSoc}.

\begin{remark}
\emph{If $r=0$ then $\sigma(\Gamma)=(g;-;[-];\{-\})$, with $g\ge3$, and then $%
\Gamma$ itself is torsion-free, and obviously contains orientation-reversing elements, 
so Theorem \ref{theorem.unbordered.non-orientable} is trivially true in that case. 
It is now quite natural to ask whether such an NEC group $\Gamma$ contains
any proper normal subgroups satisfying Theorem \ref{theorem.unbordered.non-orientable}.
It is easy to see that the answer is affirmative.  For let $G$ be any finite group with a non-trivial element $u$,  
and define $\phi:\Gamma\to G$ by setting $\phi(d_1)=u,$ $\phi(d_2)=u^{-1}$ and $\phi(d_i)=1$ for all $i\ge3$. Then this gives a homomorphism whose kernel is a proper normal torsion-free subgroup of $\Gamma$ 
containing the orientation-reversing element $d_3$.} 
\end{remark}

\medskip

We now proceed to the case where $k=1.$

\subsubsection{\protect\boldmath$\protect\sigma(\Gamma)=(g;-;[m_1,%
\ldots,m_r];\{(n_1,\ldots,n_s)\})$}

\label{subsection.bordered.non-orientable.k=1}

\begin{theorem}
\label{theorem.bordered.non-orientable.k=1} Let $\Gamma$ be an NEC group
with signature $\sigma(\Gamma)=(g;-;[m_1,\ldots,m_r];\{(n_1,\ldots,n_s)\}).$
Then $\Gamma$ has a torsion-free normal subgroup of finite index in $%
\Gamma$ that contains orientation-reversing elements.
\end{theorem}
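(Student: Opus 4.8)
The plan is to build a homomorphism $\phi\colon\Gamma\to G$ onto a finite group with torsion-free kernel containing an orientation-reversing element. Two facts keep this under control: the torsion elements of $\Gamma$ are exactly the conjugates of powers of the $x_i$ and of elements of the dihedral subgroups $\langle c_{j-1},c_j\rangle\cong D_{n_j}$ (and of $\langle c_0\rangle$ when $s=0$), so $\ker\phi$ will be torsion-free as soon as $\phi$ is injective on each of these finite subgroups; and $d_1$ is a glide reflection, so every odd power of it is orientation-reversing and of infinite order, so such an odd power is the element we will place in the kernel.

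I would take $G=A\times B$, with $A$ carrying the proper periods and $B$ the period cycle. For $A$ use $G_{[m_1,\dots,m_r]}$ when $r\ge3$ (so $X_1\cdots X_r=1$) or, in general, a direct product $\mathbb{Z}_{m_1}\times\cdots\times\mathbb{Z}_{m_r}$ of cyclic groups (empty if $r=0$), and put $\phi(x_i)=(X_i,1)$ with $X_i$ of order $m_i$. Equip $B$ with involutions $C_0,\dots,C_{s-1}$ and an element $W$ of \emph{odd} order such that, writing $C_s:=WC_0W^{-1}$, each consecutive pair $\langle C_{j-1},C_j\rangle$ $(1\le j\le s)$ is a faithfully embedded dihedral group of order $2n_j$. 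Then set $\phi(c_j)=(1,C_j)$ for $0\le j\le s-1$ and $\phi(e_1)=\big((X_1\cdots X_r)^{-1},W\big)$. The long relation $d_1^2\cdots d_g^2\,x_1\cdots x_r\,e_1=1$ now forces $\prod_i\phi(d_i)^2=(1,W^{-1})$; since $W$ has odd order $t$, one has $W^{-1}=(W^{(t-1)/2})^2$ with $W^{(t-1)/2}$ again of odd order, so I put $\phi(d_1)=(1,W^{(t-1)/2})$ and $\phi(d_i)=1$ otherwise when $g=1$, and $\phi(d_1)=1$, $\phi(d_2)=(1,W^{(t-1)/2})$, $\phi(d_i)=1$ otherwise when $g\ge2$. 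A check that the remaining relations survive is routine; $\phi$ is injective on the finite subgroups above because $A$ and $B$ were built with faithful cyclic subgroups (for the $x_i$) and faithful dihedral subgroups (for the period cycle); and $\ker\phi$ contains $d_1$ when $g\ge2$, or the odd power $d_1^{\,t}$ when $g=1$.

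It remains to produce $B$ and $W$, dividing into cases on $s$. For $s\ge3$: take $B=G_{(n_1,\dots,n_s)}$ and $W=1$, so $C_s=C_0$ and $\langle C_{s-1},C_0\rangle\cong D_{n_s}$ is exactly relation \eqref{G(n_1,...,n_s)}. For $s=2$: take $B=G_{(n_1,n_2,3)}$ and $W=C_0C_2$ (of order $3$); conjugation by the rotation $C_0C_2$ inside $\langle C_0,C_2\rangle\cong D_3$ carries $C_0$ to $C_2$, while $\langle C_0,C_1\rangle\cong D_{n_1}$ and $\langle C_1,C_2\rangle\cong D_{n_2}$. For $s=1$ with $n_1$ odd: take $B=D_{n_1}$ and $W$ the power of $C_0C_1$ of odd order with $WC_0W^{-1}=C_1$. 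For $s=1$ with $n_1$ even: take $B=D_{n_1}\wr\mathbb{Z}_3$ (base group $D_{n_1}^3$), let $W$ generate the $\mathbb{Z}_3$, and put $C_0=(a_0,a_1,1)$ for the standard generating reflections $a_0,a_1$ of $D_{n_1}$; then $C_1:=WC_0W^{-1}=(1,a_0,a_1)$, the element $C_0C_1=(a_0,a_1a_0,a_1)$ has order $\lcm(2,n_1)=n_1$ precisely because $n_1$ is even, and $C_0\notin\langle C_0C_1\rangle$, so $\langle C_0,C_1\rangle\cong D_{n_1}$ faithfully. For $s=0$: take $B=\mathbb{Z}_2=\langle C_0\rangle$ and $W=1$; the relation $c_0=e_1c_0e_1^{-1}$ then holds because $\phi(e_1)$ and $\phi(c_0)$ lie in complementary direct factors ($A\times\{1\}$ and $\{1\}\times B$) and hence commute.

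The step I expect to require the most care is the small-$s$ part, especially $s=1$ with $n_1$ even. The relation $c_s=e_1c_0e_1^{-1}$ forces the two ends of the period-cycle chain to be conjugate in $\Gamma$; realizing this conjugacy in a finite quotient while keeping $\ker\phi$ torsion-free requires a conjugating element of \emph{odd} order, since otherwise, when $g=1$, only an even (hence orientation-preserving) power of the sole glide reflection $d_1$ would be killed. With a single even link period no dihedral group of the appropriate order contains such an element, which is why the wreath product $D_{n_1}\wr\mathbb{Z}_3$ must be introduced; checking that it still embeds $D_{n_1}$ faithfully and remains compatible with the long relation is the crux. The rest — verifying the defining relations under $\phi$ and invoking the known description of the finite subgroups of an NEC group for torsion-freeness — is bookkeeping.
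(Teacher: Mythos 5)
Your proposal is correct, and it takes a genuinely different route from the paper. The paper first disposes of $r=s=0$, and otherwise maps all reflections to the generator of $\mathrm{C}_2$, identifies the kernel $\ker\psi$ as an unbordered non-orientable NEC group with signature $(2g;-;[m_1,\ldots,m_r,n_1,\ldots,n_s,m_r,\ldots,m_1];\{-\})$ (via an explicit canonical generating set and Macbeath's theorem), applies the already-proved unbordered case to get a torsion-free normal subgroup $K\trianglelefteq\ker\psi$ with glide reflections, and then passes to $K\cap c_0Kc_0$, tracking which glide reflections survive the intersection. You instead build the finite quotient of $\Gamma$ directly as $A\times B$, absorbing $(X_1\cdots X_r)^{-1}$ into $\phi(e_1)$ so that $A$ can simply be $\mathbb{Z}_{m_1}\times\cdots\times\mathbb{Z}_{m_r}$, and isolating the real difficulty in the construction of $B$: a group carrying a faithful chain of dihedral subgroups $D_{n_1},\ldots,D_{n_s}$ whose two end reflections are conjugate by an element $W$ of \emph{odd} order, the parity of $W$ being exactly what lets the long relation be satisfied with $\phi(d_1)$ of odd order when $g=1$, so that an odd (hence orientation-reversing) power of $d_1$ falls in the kernel. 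Your case analysis on $s$ checks out, including the subtlest subcase $s=1$ with $n_1$ even, where your diagnosis is right that no dihedral group supplies an odd-order conjugator between the two ends, and the wreath product $D_{n_1}\wr\mathbb{Z}_3$ does the job (the computations $C_0C_1=(a_0,a_1a_0,a_1)$ of order $\mathrm{lcm}(2,n_1)=n_1$ and $C_0\notin\langle C_0C_1\rangle$ are correct, as is $W=(C_0C_1)^{(n_1-1)/2}$ in the odd case and $W=C_0C_2$ of order $3$ when $s=2$). What the paper's route buys is uniformity — no case split on $s$ beyond what Theorem \ref{theorem.unbordered.non-orientable} already contains — at the cost of the signature computation for $\ker\psi$ and the bookkeeping in the intersection; what yours buys is an explicit finite quotient and an explicit orientation-reversing kernel element in every case, resting only on the standard classification of finite subgroups of NEC groups (conjugates of subgroups of the $\langle x_i\rangle$ and of the $\langle c_{j-1},c_j\rangle$), which the paper also uses elsewhere.
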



\begin{proof}
First, if $r=s=0$ then $g \ge 2$, and we may define a homomorphism 
$\phi:\Gamma\to  \mathrm{C_2} = \langle u \rangle$ by 
setting $\phi(d_i)=1$ for $1\le i\le g,\,$ and $\phi(c_0)=u\,$ and $\,%
\phi(e_1)=1.$  Then $\ker\phi$ is torsion-free, has index $2$ in $\Gamma$, and contains 
the orientation reversing elements $d_i$. 

Hence we may assume from now on that $r+s > 0$.

Now let $\{d_1,\ldots,d_g,x_1,\ldots,x_r,c_0,\ldots,c_s,e\}$ be a canonical set
of generators for $\Gamma,$ with $c_s=ec_0e^{-1}.$ Then the assignment $%
\psi:\Gamma\to\mathrm{C}_2=\langle u\rangle$ given by $\psi(c_{i})=u$ for $%
0 \le i \le s$ and $\psi(x)=1$ for every remaining canonical generator $x$ of $%
\Gamma$ clearly defines a homomorphism, with orientation-preserving kernel. 
By \cite[Theorems 2.1.3 and 2.2.4]{begg} and the Riemann-Hurwitz
formula, it is easy to see that $\ker\psi$ has signature $%
(2g;-;[m_1,m_1,\ldots,m_r,m_r,$ $n_1,\ldots,n_s];\{-\}),$ and then using conjugation of the generators, 
we can rearrange the proper periods so that it has signature $(2g;-;[m_1,m_2,%
\ldots,m_r, n_1,n_2,\ldots,n_s,$ $m_r,m_{r-1},\ldots,m_1];\{-\}).$ In that case, 
a set of generators for $\ker\psi$ corresponding to the signature may be taken as follows:

\begin{itemize}
\item $d^{\,\prime }_i=d_{g+1-i}^{\,-1}$ for $1 \le i \le g$, 

\item $d^{\,\prime }_{g+i} = c_0d_ic_0$ for $1 \le i \le g$, 

\item $x^{\,\prime }_i=c_0x_ic_0\,$  (so that $%
x^{\,\prime }_i$ has order $m_i)\,$ for $1 \le i \le r$ 

\item $x^{\,\prime }_{r+j}=c_{j-1}c_j\,$  (so that $%
x^{\,\prime }_{r+j}$ has order $n_j)\,$ for $1 \le j \le s$, \ and 

\item $x^{\,\prime }_{r+s+i}= (x_{r+1-i})^{-1}\,$  (so that $x^{\,\prime }_{r+s+i}$ has order $m_{r+1-i})\,$ 
for $1 \le i \le r\,;$ 
\end{itemize}

\noindent 
and then it is easy to see that these generators satisfy the relations
\begin{equation}  \label{defining.relations}
({x^{\,\prime }_i})^{m_i}=({x^{\,\prime }}_{\!\!r+s+i})^{m_i}=({x^{\,\prime }}%
_{\!\!r+j})^{n_j}=1 \  \mbox{ for all $i$ and $j,\,$, and } \, {%
(d_1^{\,\prime }})^2\cdots ({d_{2g}^{\,\prime }})^2 \, x^{\,\prime }_1\cdots
x^{\,\prime }_{2r+s}=1.
\end{equation}

\noindent 
In fact, this set $\{d^{\prime }_1,\ldots,d^{\prime }_{2g},x^{\prime
}_1,\ldots,x^{\prime }_{2r+s}\}$ constitutes a canonical set of generators for 
$\ker\psi,$ by Remark~\ref{complete.system.of.generators.and.relations} below.

Now by Theorem \ref{theorem.unbordered.non-orientable}, the NEC group $\ker\psi$ 
has a torsion-free normal subgroup $K$ of finite index containing orientation-reversing elements,  
and by Lemma \ref%
{technical.lemma} it follows that the intersection $N=K\cap c_0 Kc_0$ is a normal subgroup
of finite index in $\Gamma.$ Then since $N$ is clearly torsion-free, all we need to do is show 
that $N$ contains orientation-reversing elements. 
To do that, we note that $r+2s \ge r+s \ge 1$, and show that there exists a glide reflection $%
d\in K$ such that its conjugate $c_0dc_0$ also belongs to $K.$ 

If $2r+s>2$, then by the way in which $K$ is obtained in the proof 
of Theorem \ref{theorem.unbordered.non-orientable}, we may suppose that all of the canonical 
generating glide reflections $d^{\,\prime }_i$ of $\ker\psi$ are contained in $K$, 
and then by the above expression of generators of $\ker\psi$ in terms of the canonical generators 
of $\Gamma$, we find that 
$c_0 d^{\,\prime }_i c_0 = c_0 d_{g+1-i}c_0 =(d^{\,\prime}_{2g+1-i})^{-1},$ 
which is the inverse of a glide reflection in $K,$ and hence lies in $K$, for all $i$. 

If $2r+s=2,$ then by the proof of Theorem \ref{theorem.unbordered.non-orientable}, 
we may suppose that $K$ contains $%
(d^{\,\prime }_1)^3$ and all the remaining generating glide reflections $%
d^{\,\prime }_2,\ldots,d^{\,\prime }_{2g}.$ 
This time we find that 
$c_0(d^{\,\prime}_1)^3c_0 = c_0(d_g)^{-3}c_0 = (c_0d_gc_0)^{-3} =(d^{\,\prime }_{2g})^{-3}, $
which is a glide reflection lying in $K.$  

Finally, if $2r+s=1$ then we may suppose that $K$ contains $(d^{\,\prime}_1)^3,$ $(d^{\,\prime }_2)^3$ 
and all of the remaining glide reflections $d^{\prime }_3,\ldots,d^{\prime }_{2g}$ (if $g > 1$), 
and then just as above, we find that the conjugate $c_0(d^{\,\prime }_1)^3c_0$ lies in $K,$ 
completing the proof.
\end{proof}

${}$\\[-42pt] ${}$

\begin{remark}
\label{special cases} 
\emph{Theorem \ref{theorem.bordered.non-orientable.k=1} is clearly valid in the special cases 
of NEC groups $\Gamma$ with signatures $(g;-;[-];\{(n_1)\}),$ for which $2r+s=1$, 
and $(g;-;[-];\{(n_1,n_2)\}),$ for which $2r+s=2.$ }
\end{remark}

\begin{remark}
\label{complete.system.of.generators.and.relations} \emph{The edges of a
fundamental region $F$ for $\Gamma$ can be labelled according to the action 
of the elements of $\Gamma$; see \cite{wilkie} or \cite[Chap. 0]%
{begg}. In this case we can choose $F$ to be a hyperbolic polygon with $%
2g+2r+s+3$ edges, so that its labelling is $\delta_1\delta_1^{\,*}\ldots%
\delta_{g}\delta_{g}^{\,*}\,\xi_1\xi_1^{\,\prime }\ldots\xi_r\xi_r^{\,\prime
}\,\varepsilon\,\gamma_0\ldots\gamma_s\,\varepsilon^{\prime},$ 
where $d_i$ swaps each $\delta_i$ with $\delta_i^{\,*}$ (for $1 \le i \le g$), 
and $x_i$ swaps each $\xi_i$ with $\xi^{\,\prime}_i$ (for $1 \le i \le r$), 
and $e$ swaps $\varepsilon$ with $\varepsilon^{\prime},$ and $c_i$ fixes  each $\gamma_i.$
A fundamental region $\Psi$ for $\ker\psi$ can then be obtained by glueing
together two copies of $F.$ 
This gives a labelling of the edges of $\Psi,$ and then the fundamental region (and it labelling) 
can be modified by cutting and glueing different parts of $\Psi$. 
Applying this procedure in a suitable way, we can obtain a fundamental region for $\ker\psi$
with $4g+4r+2s$ edges labelled 
as $%
\hat\delta_1\hat\delta_1^{\,*}\ldots\hat\delta_{2g}\hat\delta_{2g}^{\,*}\,\mu_1\mu^{%
\,\prime }_1\ldots\mu_r\mu^{\,\prime }_r \,\eta_1\eta^{\,\prime
}_1\ldots\eta_s\eta^{\,\prime }_s\nu_r\,\nu^{\,\prime }_r\ldots\nu_1\nu^{\,\prime
}_1$, 
where $d^{\,\prime }_i=d_{g+1-i}^{\,-1}$ swaps $\hat\delta_i$ with $%
\hat\delta^*_i$ for $1 \le i \le g,$ 
and $d^{\,\prime }_{g+i} = c_0d_ic_0$ swaps 
$\hat\delta_{g+i}$ with $\hat\delta^{\,*}_{g+i}$ for $1 \le i \le g$, 
and $x^{\,\prime }_i=c_0x_ic_0$ swaps $\mu_i$ with $\mu^{\,\prime }_i$ for $1 \le i \le r$,
and $x^{\,\prime }_{r+i}=c_{i-1}c_i$ swaps $\eta_i$ with $\eta^{\,\prime }_i$ for $1 \le i \le s,$ 
and $x^{\,\prime }_{r+s+i}= (x_{r+1-i})^{-1}$ swaps $\nu_{r+1-i}$ with $\nu^{\,\prime }_{r+1-i}$ 
for $1 \le i \le r$. 
In this situation, a theorem by Macbeath \cite%
{macbeath.annals} shows that the relations \eqref{defining.relations}
constitute a complete set of defining relations  for the group $\ker\psi$ in terms of its generating set 
$\{d^{\,\prime}_1,\ldots,d^{\,\prime }_{2g},x^{\,\prime }_1,\ldots,x^{\,\prime }_{2r+s}\}$. 
 }
\end{remark}

\medskip

Finally in this subsection~\ref{subsection.sign(Gamma)=-}, we consider the case $k\ge2.$

\subsubsection{\protect\boldmath$\protect\sigma(\Gamma)=(g;-;[m_1,%
\ldots,m_r]; \{\mathcal{C}_1,\ldots,\mathcal{C}_k\}) \ \mbox{with} \ k\ge2$}

\begin{theorem}
\label{theorem.non-orientable.k>1} Let $\Gamma$ be an NEC group with
signature $(g;-;[m_1,\ldots,m_r];\, \{\mathcal{C}_1,\ldots,\mathcal{C}_k\})$, 
such that $k\ge2.$ 
Then $\Gamma$ has a torsion-free normal subgroup of finite index in $%
\Gamma$ that contains orientation-reversing elements.
\end{theorem}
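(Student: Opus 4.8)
The plan is to imitate the proof of Theorem~\ref{theorem.bordered.non-orientable.k=1} and reduce to the already-settled case $k=0$ by passing to the index-$2$ subgroup that kills \emph{all} the reflections at once (this is the point where allowing more than one period cycle costs nothing). Writing the period cycles as $\mathcal{C}_i=(n_{i1},\ldots,n_{is_i})$ and fixing a canonical generating set $\{d_1,\ldots,d_g,x_1,\ldots,x_r,c_{10},\ldots,c_{1s_1},\ldots,c_{k0},\ldots,c_{ks_k},e_1,\ldots,e_k\}$ for $\Gamma$ (note $g\ge1$ since the sign is ``$-$''), define $\psi\colon\Gamma\to\mathrm{C}_2=\langle u\rangle$ by sending every reflection $c_{ij}$ to $u$ and every other canonical generator to $1$. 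The defining relations of $\Gamma$ are immediately respected, so $\psi$ is a well-defined epimorphism. Since $\psi$ factors through the abelianisation of $\Gamma$, every reflection of $\Gamma$ (being conjugate to some $c_{ij}$) maps to $u\ne1$, so $\ker\psi$ contains no reflections and hence no period cycles; as it does contain the glide reflections $d_i$ it is a proper NEC group whose signature therefore has the form $(\gamma';-;[\,\cdot\,];\{-\})$. Using \cite[Theorems~2.1.3 and~2.2.4]{begg} together with the Riemann--Hurwitz formula one computes, after a suitable rearrangement of the proper periods, that
\[
\sigma(\ker\psi)=(2g+2k-2;\,-;\,[m_1,\ldots,m_r,\,n_{11},\ldots,n_{1s_1},\ldots,n_{k1},\ldots,n_{ks_k},\,m_r,\ldots,m_1];\,\{-\}).
\]

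Next I would record a canonical generating set of $\ker\psi$ expressed in terms of the canonical generators of $\Gamma$, obtained from the usual cutting and pasting of a fundamental region for $\Gamma$ (in the style of Remark~\ref{complete.system.of.generators.and.relations}). The only feature actually needed is that the canonical glide-reflection generators $d'_1,\ldots,d'_{2g+2k-2}$ of $\ker\psi$ can be chosen so that conjugation by the reflection $c_{10}$ carries each $d'_i$ to $(d'_{j})^{\pm1}$ for some index $j$ --- precisely the phenomenon seen in the $k=1$ case, where $c_0d'_ic_0=(d'_{2g+1-i})^{-1}$. Granting this, apply Theorem~\ref{theorem.unbordered.non-orientable} (together with the Remark following it, which covers the case of no proper periods; this happens exactly when $r=0$ and every $\mathcal{C}_i$ is empty, in which case $\ker\psi$ is already torsion-free and one simply takes $N=\ker\psi$) to obtain a torsion-free normal subgroup $K$ of finite index in $\ker\psi$ containing orientation-reversing elements. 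By the explicit construction in that proof, $K$ may be taken to contain every $d'_i$ (or, in the subcases with one or two proper periods, to contain $(d'_1)^3$, $(d'_2)^3$ and all remaining $d'_i$). Since $c_{10}\notin\ker\psi$ and $c_{10}^{\,2}=1$, Lemma~\ref{technical.lemma} applies with $H=\ker\psi$ and $c=c_{10}$, so $N:=K\cap c_{10}Kc_{10}$ is a normal subgroup of finite index in $\Gamma$, and it is torsion-free because $N\subseteq K$.

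To finish, one checks that $N$ contains an orientation-reversing element: choose a canonical glide-reflection generator $d'_i\in K$ (or a power $(d'_i)^3\in K$ in the low-periods subcases, selecting the index to avoid the exceptional cases, exactly as in the proof of Theorem~\ref{theorem.bordered.non-orientable.k=1}); then by the conjugation property above its $c_{10}$-conjugate equals $(d'_j)^{\pm1}$ (respectively $(d'_j)^{\pm3}$), which again lies in $K$. Hence $d'_i\in K\cap c_{10}Kc_{10}=N$ (respectively $(d'_i)^3\in N$), and this element reverses orientation. The step I expect to be the genuine obstacle is the bookkeeping in the middle: producing the explicit canonical generating set of $\ker\psi$ in terms of that of $\Gamma$ and verifying that $c_{10}$-conjugation permutes the glide-reflection generators up to inversion. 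With $k\ge2$ there are now $k$ period cycles and $k$ connecting generators $e_i$ entering the long relation, so the fundamental-region surgery is bulkier than for $k=1$; but it is structurally the same computation, and once the generating set is in hand the rest is routine and parallels Theorem~\ref{theorem.bordered.non-orientable.k=1}.
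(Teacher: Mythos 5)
Your strategy is genuinely different from the one in the paper, and as written it has a real gap at its central step. The paper does \emph{not} generalise the fundamental-region surgery of the $k=1$ case to $k\ge 2$. Instead (after disposing of the single signature $(g;-;[-];\{(-),(-)\})$ by a map onto $\mathrm{C}_2$), it forms the auxiliary NEC group $\Gamma_0$ with signature $(0;+;[m_1,\ldots,m_r];\{\mathcal{C}_1,\ldots,\mathcal{C}_k\})$ -- same torsion data, genus zero, sign $+$ -- which has positive area precisely because we are not in the excluded case. Theorem~\ref{Fenchel's.conjecture.fuchsian} applied to $\Gamma_0^{+}$, followed by Lemma~\ref{technical.lemma}, yields a torsion-free normal subgroup $\Lambda$ of finite index in $\Gamma_0$, and the finite quotient $G=\Gamma_0/\Lambda$ then receives a homomorphism from $\Gamma$ that kills every $d_i$ and sends each $x_i$, $c_{ij}$, $e_i$ to its counterpart in $G$. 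Because $\Lambda$ is torsion-free, every finite subgroup $\langle x_i\rangle$ and $\langle c_{ij-1},c_{ij}\rangle$ injects into $G$, so the kernel is torsion-free, and it visibly contains the glide reflections $d_i$. This completely sidesteps the index-$2$ reduction, the signature computation for $\ker\psi$, and the conjugation analysis that your argument depends on.

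The gap in your proposal is the step you yourself flag and then assume: that a canonical generating set of $\ker\psi$ can be exhibited in terms of the canonical generators of $\Gamma$ so that the $2g+2k-2$ glide-reflection generators are permuted, up to inversion, by conjugation by $c_{10}$ (and so that enough of them, or their cubes, land in the subgroup $K$ produced by Theorem~\ref{theorem.unbordered.non-orientable}). For $k=1$ this is exactly the content of the displayed generator list and Remark~\ref{complete.system.of.generators.and.relations}, and even there it rests on Macbeath's theorem applied to an explicit cut-and-paste of two copies of a fundamental region. For $k\ge 2$ the extra genus $2k-2$ of $\ker\psi$ must be manufactured from the $k$ connecting generators $e_i$ and the reflections $c_{i0}$ of the different period cycles, and it is not automatic that the resulting glide-reflection generators can be chosen simultaneously to (i) begin the long relation $(d_1')^2\cdots(d_{2g+2k-2}')^2x_1'\cdots=1$ in canonical form, (ii) include $d_{g+1-i}^{-1}$ and $c_{10}d_ic_{10}$ as in the $k=1$ case, and (iii) behave as claimed under $c_{10}$-conjugation. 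Your signature computation $\sigma(\ker\psi)=(2g+2k-2;-;[\ldots];\{-\})$ is correct, and I expect the bookkeeping can be completed (note also that by normality of $K$ in $\ker\psi$ it would suffice that $c_{10}d_j'c_{10}$ be \emph{conjugate in $\ker\psi$} to some $(d_l')^{\pm1}$ with $d_l'\in K$, which gives you some slack), but until that generator list is actually produced and verified, the proof is incomplete at the very point where the conclusion ``$N$ contains an orientation-reversing element'' is established.
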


\begin{proof}
First, suppose $k=2$ and $r=s_1=s_2=0$, so that $\Gamma$ has signature $(g;-;[-];\{(-),(-)\}).$ 
Then there exists a homomorphism $\Gamma\to \mathrm{C}_2=\langle u\rangle$ given by 
the assignments $d_i\mapsto 1, \ c_{10}\mapsto u, \ c_{20}\mapsto u, \ e_1\mapsto 1\,$ and $\,e_2\mapsto 1$,  
with kernel a torsion-free normal subgroup of index $2$ in $\Gamma$ which contains 
the orientation-reversing element(s) $d_i$. 

Next, if we are not in the situation where $k=2$ and $r=s_1=s_2=0$, consider an NEC group $\Gamma_0$ 
with signature $\sigma_0=(0;+;[m_1,\ldots,m_r];$ $\{\mathcal{C}_1,\ldots,\mathcal{C}_k\})$, 
obtainable by ``keeping the torsion part'' of the given signature $\sigma(\Gamma)$ 
and setting $g$ to be $0$.  We can do this, as the assumption about $k$, $r$, $s_1$ and $s_2$ 
implies that a fundamental region for this group $\Gamma_0$ has positive area.

Applying Theorem \ref{Fenchel's.conjecture.fuchsian} to the canonical
Fuchsian subgroup $\Gamma_0^+$ of $\Gamma_0$ gives the existence of a torsion-free normal
subgroup $K$ of finite index in $\Gamma_0^+$, and then letting $\{1,c\}$ be a transversal for 
$\Gamma_0^+$ in $\Gamma_0$, we find by Lemma \ref{technical.lemma} that the
intersection $\Lambda:=K\cap c^{-1}Kc$ is a torsion-free normal subgroup of finite
index in $\Gamma_0$. 
It follows that the factor group $\Gamma_0/\Lambda$ is a finite group $G$
generated by elements $X_i:=\Lambda x_i$ for $1\le i\le r,$ and $%
C_{ij}:=\Lambda c_{ij}$ for $0\le j \le s_i$ and $1\le i\le k$, and $%
E_i:=\Lambda e_i$ for $1\le i \le k,\,$ subject to the relations
\begin{eqnarray*}
& & \!\!X_i^{m_i}=1 \ \hbox{ for } 1\le i\le r, \quad
C_{ij-1}^{\,2}=C_{ij}^{\,2}=(C_{ij-1}C_{ij})^{n_{ij}}=1 \ \hbox{ for } 0\le j \le s_i \,\hbox{ and }\, 1\le i\le k,  \\
& & \quad E_iC_{i0}E_i^{-1} = C_{is_i} \ \hbox{ for } 1\le i\le k, \ \ \hbox{ and} \ \ X_1\cdots X_r\,E_1\cdots E_k= 1,
\end{eqnarray*}
plus other relations which make $G = \Gamma_0/\Lambda$ finite. 
We can now define $%
\phi:\Gamma\to G$ by setting $\phi(d_i)=1$ for $1\le i\le g,$ and $\phi(x_i)=X_i$ for $%
1\le i\le r,$ and $\phi(c_{ij})=C_{ij}$ for $0\le j \le s_i$ and $1\le i\le k$, and $%
\phi(e_i)=E_i$ for $1\le i\le k,$ giving a homomorphism whose kernel is a torsion-free 
normal subgroup of finite index $|G|$ in $\Gamma$, and contains the orientation-reversing element(s) $d_i$.
\end{proof}

\subsection{sign({\protect\boldmath$\Gamma)=``+$''}   \ (with {\protect \boldmath$k\ge1)$}}\label{subsection.sign(Gamma)=+}

We first consider the case $g>0.$


\begin{theorem}
\label{theorem.non-orientable.g>0} Let $\Gamma$ be a proper NEC group with
signature $(g;+;[m_1,\ldots,m_r];\, \{\mathcal{C}_1,\ldots,\mathcal{C}_k\})$
for some $g>0.$ Then $\Gamma$ has a torsion-free normal subgroup of finite index 
that contains orientation-reversing elements.
\end{theorem}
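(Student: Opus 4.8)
The plan is to reduce the statement to Theorem~\ref{theorem.unbordered.non-orientable} by passing to a normal subgroup of index~$2$, in the spirit of the proof of Theorem~\ref{theorem.bordered.non-orientable.k=1}. Fix a set of canonical generators $a_1,b_1,\ldots,a_g,b_g,\,x_1,\ldots,x_r,\,c_{10},\ldots,c_{ks_k},\,e_1,\ldots,e_k$ for $\Gamma$ corresponding to the given signature. The first step is to define a homomorphism $\psi\colon\Gamma\to\mathrm{C}_2=\langle u\rangle$ by sending every reflection $c_{ij}$ to $u$, sending $a_1$ to $u$, and sending every remaining canonical generator (that is, $b_1$, all of $a_2,b_2,\ldots,a_g,b_g$, all the $x_i$, and all the $e_i$) to $1$. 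All the defining relations of $\Gamma$ are respected: the reflection relations ${c_{ij-1}}^{2}={c_{ij}}^{2}=(c_{ij-1}c_{ij})^{n_{ij}}=1$ map to $u^2=\cdots=1$; each relation $c_{is_i}=e_ic_{i0}{e_i}^{-1}$ maps to $u=u$; each $x_i^{m_i}=1$ maps to $1=1$; and the long relation maps to~$1$, because $\mathrm{C}_2$ is abelian, so $\psi([a_1,b_1])=[\psi(a_1),\psi(b_1)]=1$ while every other factor is already trivial. This is the only point at which the hypothesis $g>0$ is used: it supplies an orientation-preserving generator $a_1$ on which the character $\psi$ can be made non-trivial.

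Set $\Lambda=\ker\psi$, a normal subgroup of index~$2$ in $\Gamma$. Since $\psi$ takes the value $u$ on every reflection of $\Gamma$ and $\mathrm{C}_2$ is abelian, no conjugate of any $c_{ij}$ lies in $\Lambda$; hence $\Lambda$ contains no reflections, and therefore has no period cycles in its signature at all (an empty period cycle would still contribute a reflection). On the other hand $\Lambda$ contains the element $c_{10}a_1$, which is orientation-reversing and non-trivial (a reflection times a hyperbolic element cannot equal the identity), so $\Lambda$ is not a Fuchsian group. An NEC group with no period cycles that is not Fuchsian must have sign ``$-$'', so $\Lambda$ has signature $(\gamma';-;[l_1,\ldots,l_t];\{-\})$ for some $\gamma'\ge1$ and some proper periods $l_1,\ldots,l_t$; the values of these periods and of $\gamma'$ can be read off from \cite[Theorems 2.1.3 and 2.2.4]{begg} and the Riemann--Hurwitz formula, but are not needed in what follows. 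In particular $\Lambda$ is an NEC group of exactly the kind treated in Theorem~\ref{theorem.unbordered.non-orientable}.

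Applying Theorem~\ref{theorem.unbordered.non-orientable} to $\Lambda$ yields a torsion-free normal subgroup $K$ of finite index in $\Lambda$ that contains orientation-reversing elements. Fixing a transversal $\{1,c\}$ for $\Lambda$ in $\Gamma$, Lemma~\ref{technical.lemma} shows that $N:=K\cap c^{-1}Kc$ is a normal subgroup of finite index in $\Gamma$, and it is torsion-free because $N\subseteq K$. If $t=0$ then $\Lambda$ is itself torsion-free and normal in $\Gamma$, so we may take $N=\Lambda$ and be done; hence assume $t\ge1$. It then remains to show that $N$ contains an orientation-reversing element, and this is the step I expect to be the main obstacle. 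The way to handle it is to mimic the end of the proof of Theorem~\ref{theorem.bordered.non-orientable.k=1}: compute an explicit set of canonical generators $d_1',\ldots,d_{\gamma'}',x_1',\ldots,x_t'$ for $\Lambda$ in terms of the canonical generators of $\Gamma$, using the labelled fundamental-polygon cutting-and-glueing technique of \cite{wilkie} and \cite[Chap.~0]{begg} (as in Remark~\ref{complete.system.of.generators.and.relations}), take the transversal element $c$ to be a reflection such as $c_{10}$, and verify that conjugation by $c$ sends each canonical glide reflection $d_j'$ of $\Lambda$ to $(d_{\sigma(j)}')^{\pm1}$ for some permutation $\sigma$ of $\{1,\ldots,\gamma'\}$. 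Recalling from the proof of Theorem~\ref{theorem.unbordered.non-orientable} how $K$ arises --- it contains all the $d_j'$ when $t\ge3$, contains $(d_1')^{3}$ together with $d_2',\ldots,d_{\gamma'}'$ when $t=2$, and contains $(d_1')^{3}$, $(d_2')^{3}$ together with $d_3',\ldots,d_{\gamma'}'$ when $t=1$ --- one checks in each case that a suitable canonical glide reflection of $\Lambda$, or a cube of one, lies in $K$ along with its $c$-conjugate, hence lies in $N$; this $N$ is then torsion-free, of finite index in $\Gamma$, and contains an orientation-reversing element, as required. The delicate part is the bookkeeping of the periods $l_i$ and of which glide reflection (or which of its powers) survives the intersection, which --- as already in Theorem~\ref{theorem.bordered.non-orientable.k=1} --- is organised according to whether $t\ge3$, $t=2$, or $t=1$.
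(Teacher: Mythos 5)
Your reduction is attractive and the first two thirds of it are sound: the character $\psi$ is indeed a well-defined epimorphism onto $\mathrm{C}_2$, its kernel $\Lambda$ contains no conjugate of any $c_{ij}$ and hence no reflections and no period cycles, it contains the orientation-reversing element $c_{10}a_1$, so it has signature $(\gamma';-;[l_1,\ldots,l_t];\{-\})$ and Theorem~\ref{theorem.unbordered.non-orientable} applies to it. This is a genuinely different route from the paper, which never passes to an index-$2$ subgroup here: instead it splits on whether $\mu(\Gamma)>2g$ (in which case it maps $\Gamma$ onto a finite quotient of the genus-zero group $\Gamma_0$ with the same torsion data, sending $a_1$ and $c_{10}$ to the same involution so that $a_1c_{10}$ lies in the kernel outright) or $\mu(\Gamma)\le 2g$ (in which case the signature is one of about a dozen explicit shapes, each handled by an ad hoc finite quotient). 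Your approach, if completed, would replace that entire case analysis.

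The problem is that it is not completed, and the missing step is exactly the hard one. You need $N=K\cap c^{-1}Kc$ to contain an orientation-reversing element, and this does not follow from $K$ and $c^{-1}Kc$ each containing one: both contain $\Lambda^{+}\!$-cosets of orientation-reversing elements, but their intersection could a priori land entirely inside $\Gamma^{+}$. Your proposed fix --- compute canonical generators $d_1',\ldots,d_{\gamma'}',x_1',\ldots,x_t'$ of $\Lambda$ and ``verify that conjugation by $c$ sends each $d_j'$ to $(d_{\sigma(j)}')^{\pm1}$'' --- is an unproved structural claim, not a verification. In Theorem~\ref{theorem.bordered.non-orientable.k=1} the analogous claim holds because the index-$2$ subgroup there is the kernel of the character supported \emph{only} on the reflections, so $\mathcal{H}/\ker\psi$ is the literal double (unfolding along the boundary) of $\mathcal{H}/\Gamma$, and the deck transformation visibly carries one copy of each handle to the other; that is what makes $c_0d_{g+1-i}c_0=d'_{g+i}$ come out of the fundamental-region computation in Remark~\ref{complete.system.of.generators.and.relations}. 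Your character is also non-trivial on $a_1$, so the double cover is twisted along the first handle, the fundamental region for $\Lambda$ is assembled differently, and there is no reason offered why conjugation by $c_{10}$ should permute a canonical set of glide reflections up to inversion rather than sending them to longer words involving the $x_i'$ (in which case membership of the conjugate in $K=\ker\phi$ would depend on the elliptic images $X_i$ and could fail). Until you either carry out that fundamental-region computation for your specific $\Lambda$, or find another mechanism forcing an orientation-reversing element into $K\cap c^{-1}Kc$ (e.g.\ by constructing $K$ as the kernel of a homomorphism that extends from $\Lambda$ to all of $\Gamma$, so that no intersection with a conjugate is needed), the proof is incomplete at its decisive point.
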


\begin{proof}
First we assume that $\mu(\Gamma)>2g.$ Then there exists an NEC group $%
\Gamma_0$ with signature $\sigma_0=(0;+;[m_1,\ldots,m_r];$ $\{\mathcal{C}%
_1,\ldots,\mathcal{C}_k\})$, noting that our assumption ensures that  a fundamental region 
for this group $\Gamma_0$ has positive area.
Then just as in the proof of Theorem~\ref{theorem.non-orientable.k>1}, there exists a torsion-free 
normal subgroup $\Lambda$ of finite index in $\Gamma_0$ such that the quotient $\Gamma_0/\Lambda$ 
is a finite group $G$ generated by elements 
$\,X_i:=\Lambda x_i$ for $1\le i\le r,\,$ and $C_{ij}:=\Lambda c_{ij}$ for $0\le j \le s_i$ and $1\le i\le k,\,$ 
and $\,E_i:=\Lambda e_i$ for $1\le i \le k,$
subject to the relations
\begin{eqnarray*}
& & \!\!X_i^{m_i}=1 \ \hbox{ for } 1\le i\le r, \quad
C_{ij-1}^{\,2}=C_{ij}^{\,2}=(C_{ij-1}C_{ij})^{n_{ij}}=1 \ \hbox{ for } 0\le j \le s_i \,\hbox{ and }\, 1\le i\le k,  \\
& & \quad E_iC_{i0}E_i^{-1} = C_{is_i} \ \hbox{ for } 1\le i\le k, \ \ \hbox{ and} \ \ X_1\cdots X_r\,E_1\cdots E_k= 1,
\end{eqnarray*}
plus other relations which make $G = \Gamma_0/\Lambda$ finite. 
We can now define $%
\phi:\Gamma\to G$ by setting $\phi(a_1)=C_{10},$ $\phi(x_i)=X_i$ for $1\le i\le r,$ and $%
\phi(c_{ij})=C_{ij}$ for $0\le j \le s_i$ and $1\le i\le k$, and $\phi(e_i)=E_i$ for $1\le i\le k$, 
and $\phi(x)=1$ for all of the remaining canonical generators $a_2,\ldots,a_g,b_1,\ldots,b_g$ of $\Gamma.$ 
This gives a homomorphism whose kernel is a torsion-free normal subgroup of finite index $|G|$ in $\Gamma$, 
containing the orientation-reversing element $a_1c_{10}.$ 

\medskip

For the rest of the proof, we assume instead that $\mu(\Gamma) \le 2g.$

\smallskip

In that case, \ $k-2+\sum_{i=1}^r\left(1-\frac{1}{m_i}\right)+\frac12\sum_{i=1}^k
\sum_{j=1}^{s_i}\left(1-\frac{1}{n_{ij}} \right) = \mu(\Gamma) - 2g \le 0$, 
and so clearly $k \le 2$.  
Also if $k=2$ then $\sigma(\Gamma) = (g;+;[-];\{(-),(-)\})$, while otherwise $k = 1$ and then 
setting $s= s_1$ and $n_j = n_{1j}$ for $1 \le j \le s$, we have  
$\sum_{i=1}^r\left(1-\frac{1}{m_i}\right)+\frac12 \sum_{j=1}^{s}\left(1-\frac{1}{n_{j}} \right) \le1,$
and then since $1-1/m_i\ge1/2$ for each $i$ and $1-1/n_j\ge1/2$ for each $j$, we
find that $\frac{r}{2}+\frac{s}{4} \le 1,$ so $2r+s\le4.$ 
Accordingly, we have the following possibilities: 
\\[+8pt]
\begin{tabular}{cl}
$\bullet$ & $k = 2,\, r=0,\, s_1= s_2 = 0\,$ and $\,\sigma(\Gamma) = (g;+;[-];\{(-),(-)\});$
\\[+6pt]
$\bullet$ & $k = 1,\, r=2,\, s=0\,$ and $\,\sigma(\Gamma) = (g;+;[2,2];\{(-)\});$
\\[+6pt]
$\bullet$ & $k = 1,\, r=1,\, s=2\,$ and $\,\sigma(\Gamma) = (g;+;[2];\{(2,2)\});$
\\[+6pt]
$\bullet$ & $k = 1,\, r=2,\, s=0\,$ and $\,\sigma(\Gamma) = (g;+;[2,2];\{(-)\});$
\\[+6pt]
$\bullet$ & $k = 1,\, r=1,\, s=2\,$ and $\,\sigma(\Gamma) = (g;+;[2];\{(2,2)\});$
\\[+6pt]
$\bullet$ & $k = 1,\, r=1,\, s=1$, and \\ & \ $\sigma(\Gamma) = (g;+;[2];\{(n)\}),$ $(g;+;[3];\{(3)\}),$ $%
(g;+;[3];\{(2)\})$ or $(g;+;[4];\{(2)\});$
\\[+7pt]
$\bullet$ & $k = 1,\, r=1,\, s=0\,$ and $\,\sigma(\Gamma) = (g;+;[m];\{(-)\});$
\\[+6pt]
$\bullet$ & $k = 1,\, r=0,\, s=4\,$ and $\,\sigma(\Gamma) = (g;+;[-];\{(2,2,2,2)\});$
\\[+6pt]
$\bullet$ & $k = 1,\, r=0,\, s=3\,$ and $\,\sigma(\Gamma) = (g;+;[-];\{(n_1,n_2,n_3)\})$ \\ & \ where $%
(n_1,n_2,n_3)=$ $(2,2,m),$ $(2,3,3),$ $(2,3,4),$ $(2,3,5),$ $(2,3,6),$ $%
(2,4,4)$ or $(3,3,3);$
\\[+6pt]
$\bullet$ & $k = 1,\, r=0,\, s=2\,$ and $\,\sigma(\Gamma) = (g;+;[-];\{(n_1,n_2)\});$
\\[+6pt]
$\bullet$ & $k = 1,\, r=0,\, s=1\,$ and $\,\sigma(\Gamma) = (g;+;[-];\{(n)\});$
\\[+6pt]
$\bullet$ & $k = 1,\, r=0,\, s=0\,$ and $\,\sigma(\Gamma) = (g;+;[-];\{(-)\}).$
\end{tabular}

\medskip
For most of these possibilities, we can define a homomorphism $\phi $ from $\Gamma$
to a finite group $G$ containing an involution $u$ such that 
$\phi (a_{1})=\phi(c_{10}) =u$ and $\phi(a_{i})=1$  for $2\leq i\leq g$ and $\phi (b_{i})=1$ for $1 \le i \le g$, 
and the effect of $\phi$ on other canonical generators of $\Gamma$ is arranged so that $K = \ker \phi$ is 
torsion-free.  Then $K$ has finite index in $\Gamma$ (dividing $|G|$), and contains the orientation-reversing element $a_{1}c_{10}$.
We do this in those cases as follows:
\\[+7pt]
\begin{tabular}{cl}
\!\!(a)\!\!\! & If $\sigma (\Gamma )=(g;+;[-];\{(-),(-)\})$, take $G=\mathrm{C}_{2}=\langle u \rangle$, \\ & 
and then set also  $\phi (e_{1})=\phi (e_{2})=1$ and $\phi (c_{20})=u\,;$
\\[+5pt]
\!\!(b)\!\!\! & If $\sigma (\Gamma )=(g;+;[2,2];\{(-)\})$, 
take $G=\mathrm{C}_{2}\times \mathrm{C}_{2}=\langle\, u,v \ | \ u^{2}=v^{2}=[u,v]=1\rangle$, 
 \\ & and then set also $\phi(x_{1})=v$ and $\phi(x_{2})=u,$ and $\phi (e_{1})=uv\,;$  
\\[+5pt]
\!\!(c)\!\!\! & If $\sigma (\Gamma )=(g;+;[2];\{(2,2)\})$, 
take $G=\mathrm{C}_{2}\times \mathrm{C}_{2}=\langle\, u,v \ | \ u^{2}=v^{2}=[u,v]=1\,\rangle $, 
\\ & and then set also $\phi (x_{1})=u,$ and $\phi (e_{1})=u,$  and $\phi (c_{11})=v\,;$
\\[+5pt]
\!\!(d)\!\!\! & If $\sigma (\Gamma )=(g;+;[2];\{(n)\})$, 
take $G=\mathrm{D}_{2n}=\langle\, u,v \ | \  u^{2}=v^{2}=(uv)^{2n}=1\,\rangle $, 
 \\ & and then set also  $\phi (x_{1})=v$, and $\phi (e_{1})=v$, and $\phi(c_{11}) = vuv\,;$
\\[+5pt]
\!\!(e)\!\!\! & If $\sigma (\Gamma )=(g;+;[3];\{(3)\})$, 
take $G=\mathrm{S}_4 = \langle\, u \ | \ u^{2}=v^{3}=(uv)^4=[u,v]^3=1\,\rangle$ 
via $u = (1,2)$ \\ & and $v = (2,3,4)$, 
and then set also $\phi (x_{1})=v^{-1}$, and $\phi (e_{1})=v$, and $\phi(c_{11}) = v^{-1}uv\,;$
\\[+5pt]
\!\!(f)\!\!\! & If $\sigma (\Gamma )=(g;+;[3];\{(2)\})$, 
take $G=\mathrm{A}_{4} = \langle\, u,v \ | \ u^{2}=v^{3}=[u,v]^2=1\,\rangle$ 
via $v = (1,2,3)$ \\ & and $u = (1,2)(3,4)$, 
and then set also $\phi (x_{1})=v^{-1}$, and $\phi (e_{1})=v$, and $\phi(c_{11}) = v^{-1}uv\,;$
\\[+5pt]
\!\!(g)\!\!\! & If $\sigma (\Gamma )=(g;+;[4];\{(2)\})$, 
take $G=\mathrm{D}_{4}=\langle\, u,v \ | \ u^{2}=v^{4}=(uv)^{2}=1\,\rangle$, \\ & 
and then set also $\phi (x_{1})=v^{-1}$, and $\phi (e_1)=v$, and $\phi(c_{11}) = v^{-1}uv = uv^2\,;$
\\[+5pt]
\!\!(h)\!\!\! & If $\sigma (\Gamma )=(g;+;[m];\{(-)\})$, 
take $G=\mathrm{C}_{2}\times \mathrm{\ C}_{m}=\langle\, u,v \ | \ u^{2}=v^{m}=[u,v]=1\,\rangle $, \\ & 
and then set also $\phi (x_{1})=v$ and $\phi (e_1)=v^{-1}\,;$
\\[+5pt]
\!\!(i)\!\!\! & If $\sigma (\Gamma )=(g;+;[-];\{(2,2,2,2)\})$, 
take $G=\mathrm{C}_{2}\times \mathrm{C}_{2}=\langle\, u,v \ | \ u^{2}=v^{2}=[u,v]=1\,\rangle $, \\ & 
and then set also $\phi (e_1)=1,$ and $\phi (c_{11})=v,$ $\,\phi (c_{12})=u,$ $\,\phi (c_{13})=v$ 
and $\,\phi (c_{14})=u\,;$
\\[+5pt]
\!\!(j)\!\!\! & If $\sigma (\Gamma )=(g;+;[-];\{(n_{1},n_{2},n_{3})$, 
take $G$ to be the (finite) spherical full triangle group \\ & $\Delta(n_{1},n_{2},n_{3}) 
\cong {\rm D}_m$, ${\rm A}_4$, ${\rm S}_4$ or ${\rm A}_5$  
when $(n_{1},n_{2},n_{3}) = (2,2,m),$ $(2,3,3),$ $(2,3,4),$ or $(2,3,5)$, \\ & 
or a finite smooth quotient of the (infinite) full Euclidean triangle group $\Delta(n_{1},n_{2},n_{3})$ 
\\ & when $(n_{1},n_{2},n_{3}) = (2,3,6)$, $(2,4,4)$ or $(3,3,3)$, 
generated by three involutions $u,v$ and $w$ \\ & such that $uv, vw$ and $wu$
have orders $n_{1},$ $n_{2}$ and $n_{3}$ respectively, see \eqref{G(n_1,...,n_s)}, 
and then set also \\ & $\phi(e_1)=1$, and $\phi (c_{11})=v,\,$ $\phi (c_{12})=w$ and $\phi (c_{13})=u\,;$
\\[+5pt]
\!\!(k)\!\!\! & If $\sigma (\Gamma )=(g;+;[-];\{(-)\})$, take $G=\mathrm{C}_{2}=\langle u \rangle$,  
and then set also $\phi (e_1)=1$.
\end{tabular}
 
\medskip\smallskip
In the two remaining cases, we can do the same, but take $\phi(b_1)$ as a non-trivial  
element of $G$, chosen to ensure that the relation 
$[a_1,b_1] \ldots [a_g,b_g]\, x_1 \ldots x_r \, e_1 \ldots e_k= 1$ is preserved: 
\\[+7pt]
\begin{tabular}{cl}
\!\!(l)\!\!\! & If $\sigma (\Gamma )=(g;+;[-];\{(n_1,n_2)\})$, 
take $G=\mathrm{D}_{4n_{1}n_{2}}=\langle\, u,v \ | \ u^{2}=v^{2}=(uv)^{4n_{1}n_{2}}=1\,\rangle$, \\ & 
and then set also $\,\phi(b_1) = (uv)^{n_2-n_1}$, and $\phi(e_1) = (uv)^{2(n_1-n_2)}$, 
and $\phi(c_{11}) = u(uv)^{4n_2}$ \\ & and $\phi(c_{12}) = (uv)^{4(n_1-n_2)}u\,;$
\\[+5pt]
\!\!(m)\!\!\! & If $\sigma (\Gamma )=(g;+;[-];\{(n_1)\})$, 
take $G=\mathrm{D}_{4n_1}=\langle\, u,v \ | \ u^{2}=v^{2}=(uv)^{4n_1}=1\,\rangle $, \\ & 
and then set also $\phi (b_{1})=uv,$ and $\phi (e_1)=(uv)^{-2}$, and $\phi(c_{11}) = u(uv)^{4}$.  
\end{tabular}

\smallskip

\vskip -13pt
\end{proof}


Next, we consider the case where $g=0,$ that is, 
for $\sigma (\Gamma)=(0;+;[m_{1},\ldots ,m_{r}];\{\mathcal{C}_{1},\ldots ,\mathcal{C}_{k}\}).$
This is by far the most difficult of all the cases we have considered, 
and we have resolved the NEC version of Fenchel's conjecture for only some particular subcases.

\smallskip
To explain this further,  we define the following parameters:
\\[+3pt]
\begin{tabular}{cl}
$\bullet$ & $k_{0}$ is the number of empty period cycles $\mathcal{C}=(-),$ \\
$\bullet$ & $k_{1}$ is the number of period cycles $\mathcal{C}=(n)$ with exactly one link period, \\
$\bullet$ & $k_{2}$ is the number of period cycles $\mathcal{C}=(n_1,n_2)$ with exactly two
link periods, \ and \\ 
$\bullet$ & $k_{3}$ is the number of period cycles $\mathcal{C}=(n_{1},\ldots ,n_{s})$
with three or more link periods, so $s\geq 3.$
\end{tabular}

We can resolve Fenchel's conjecture in the case of NEC groups having signatures 
with $g = 0$ and $k_{0}+k_{3}\geq 2,$ as follows. 



\begin{theorem}
\label{theorem.non-orientable.g=0.k_0+k_3>1} 
Let $\Gamma$ be a proper NEC group with
signature $(0;+;[m_1,\ldots,m_r];\, \{\mathcal{C}_1,\ldots,\mathcal{C}_k\})$, 
and let $k_0, k_1,k_2$ and $k_3$ be the parameters defined above. 
If $k_0+k_3\ge2$, then $\Gamma$ has  a torsion-free normal subgroup of finite index 
that contains orientation-reversing elements.
\end{theorem}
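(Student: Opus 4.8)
The plan is to produce a \emph{smooth} epimorphism $\phi:\Gamma\to G$ onto a finite group, where ``smooth'' means $\phi$ is injective on each cyclic subgroup generated by a canonical elliptic generator $x_i$, a canonical reflection $c_{ij}$, or a product $c_{ij-1}c_{ij}$ of consecutive reflections. Since every element of finite order in an NEC group is conjugate to a power of one of these canonical elements, smoothness forces $\ker\phi$ to be torsion-free, and its index divides $|G|$. So the whole task reduces to arranging, on top of smoothness, that $\ker\phi$ contains an element with an odd number of reflections; equivalently, since $\Gamma^{+}$ has index $2$ in $\Gamma$, that $\phi(c_{i0})\in\phi(\Gamma^{+})$ for some reflection $c_{i0}$. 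Any such kernel then automatically contains a glide reflection, being torsion-free, so it is the required subgroup.

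I would build $G$ as a direct product of standard building blocks: a factor $\mathrm{C}_{m_i}=\langle y_i\rangle$ for each proper period $m_i$, and a factor $G_i$ carrying a smooth representation of the $i$-th period cycle $\mathcal C_i$. For $\mathcal C_i$ empty take $G_i=\mathrm C_2=G_{(-)}$; for $\mathcal C_i$ with $s_i\ge 3$ link periods take $G_i=G_{(n_{i1},\dots,n_{is_i})}$ from Notation~\ref{G[m_1,...,m_r],G(n_1,...,n_s)}, and send $c_{ij}\mapsto C_j$ for $0\le j\le s_i-1$ and $c_{is_i}\mapsto C_0$, so that the chain of reflections is sent to a \emph{closed} cycle of $s_i$ involutions (this is where smoothness of $G_{(n_{i1},\dots,n_{is_i})}$ is used, and it lets us take $\phi(e_i)$ trivial on the $G_i$-coordinate); for $\mathcal C_i$ with one or two link periods take $G_i$ a suitable finite dihedral group in which $c_{i0},\dots,c_{is_i}$ can be sent to reflections realising the link periods with the correct orders, with $\phi(e_i)$ forced to be the connecting rotation. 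Now choose, using the hypothesis $k_0+k_3\ge 2$, two period cycles $\mathcal C_1$ and $\mathcal C_2$ that are empty or have at least three link periods.

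The two key moves are then: first, on $\mathcal C_1$ send the reflections into a closed cycle as above and \emph{additionally set $\phi(e_1):=\phi(c_{10})$}, viewed in the $G_1$-coordinate only; this is consistent with $c_{1s_1}=e_1c_{10}e_1^{-1}$ because $\phi(c_{1s_1})=\phi(c_{10})$ commutes with itself, and it already puts the orientation-reversing element $c_{10}e_1^{-1}$ in $\ker\phi$ (it maps to $\phi(c_{10})\phi(e_1)^{-1}=1$, it is nontrivial in $\Gamma$ since $c_{10}$ has order $2$ while $e_1$ has infinite order, and it has a single reflection). Second, set $\phi(x_i):=y_i$, set $\phi(e_i)$ on the $G_i$-coordinate as indicated above (trivial for nice cycles, the connecting rotation for short ones), take $\phi(e_2)$ trivial on the $G_2$-coordinate (so that $c_{2s_2}=e_2c_{20}e_2^{-1}$ holds with $\phi(c_{2s_2})=\phi(c_{20})$), and then tune the \emph{remaining} coordinates of $\phi(e_2)$ to make the long relation $\phi(x_1)\cdots\phi(x_r)\phi(e_1)\cdots\phi(e_k)=1$ hold. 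The long relation decouples over the direct factors, and in each factor exactly one ``debt'' must be paid: $y_i^{-1}$ in the $\mathrm C_{m_i}$-coordinate, $\phi(c_{10})^{-1}$ in the $G_1$-coordinate, the inverse connecting rotation in each short-cycle coordinate; all of these live in pairwise distinct coordinates and can be loaded onto $\phi(e_2)$ simultaneously, precisely because $\mathcal C_2$ is good enough that we are free to prescribe $\phi(e_2)$ on every coordinate other than $G_2$. One then checks routinely, factor by factor, that $\phi$ respects all defining relations of $\Gamma$, that it is smooth (immediate, since $\phi(x_i)$, $\phi(c_{ij})$ and $\phi(c_{ij-1}c_{ij})$ have the prescribed orders in their own factors), and hence that $\ker\phi$ is a torsion-free normal subgroup of finite index containing the glide reflection $c_{10}e_1^{-1}$.

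The substance of the proof is not the idea but the degenerate cases: constructing the dihedral blocks for period cycles of length $1$ or $2$ with exactly the right orders of $c_{ij-1}c_{ij}$, and verifying that the single cycle $\mathcal C_2$ really can absorb all of the accumulated debts while still satisfying its own conjugation relation. This is exactly where $k_0+k_3\ge 2$ enters twice over: one good cycle is consumed as the ``host'' (forcing $\phi(e_1)=\phi(c_{10})$, which leaves no freedom on its coordinate) and a second is needed as the ``absorber''. I expect the bookkeeping of the long relation, together with the small-$r$ and short-period-cycle book-keeping, to be the main obstacle; the construction of the smooth blocks themselves is standard, following Notation~\ref{G[m_1,...,m_r],G(n_1,...,n_s)} and the arguments in the proof of Theorem~\ref{theorem.unbordered.non-orientable}.
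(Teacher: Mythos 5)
Your proposal is correct and follows essentially the same route as the paper: the paper realises your direct product $G=\prod G_i$ as an intersection of kernels of coordinate homomorphisms (via Lemma \ref{intersection.finite.index}), uses the same two distinguished period cycles with $s=0$ or $s\ge 3$ --- one to force $\phi(e_1)=\phi(c_{10})$ and hence put the glide reflection $c_{10}e_1$ in the kernel, the other ($e_2$) to absorb the accumulated debts from the short period cycles and the proper periods --- and supplies explicitly the dihedral blocks for cycles of length $1$ and $2$ that you leave as bookkeeping. Your use of separate cyclic factors $\mathrm{C}_{m_i}$ for the proper periods, with debts $y_i^{-1}$ loaded onto $\phi(e_2)$, is a mild simplification of the paper's case analysis on $r$, but does not change the substance of the argument.
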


\begin{proof}
Without loss of generality, we may rearrange the period cycles so that the first two are 
$\mathcal{C}_1=(n_{11},\ldots,n_{1s_1})$ and $\mathcal{C}_2=(n_{21},\ldots,n_{2s_2})$, 
with $s_1=0$ or $s_1\ge3$ and $s_2=0$ or $s_2\ge3$. 

Now let $\{c_{i0},\ldots,c_{is_1},e_i\}$ be the canonical set of generators associated 
with the cycle $\mathcal{C}_i$ (with $c_{is_i}=e_ic_{i0}e_i^{-1})$, 
and let $G_{(n_{i1},\ldots,n_{is_i})}$ be a finite group with (partial) presentation
\begin{equation*}
\langle\, C_{i0},\ldots, C_{is_i-1} \ | \ C_{ij}^{\,2}\!=\!(C_{i0}C_{i1})^{n_{i1}} \!
= \! \cdots \!=\!(C_{is_i-2}C_{is_i-1})^{n_{is_i-1}} \!=\! (C_{is_i-1}C_{i0})^{n_{is_i}}= \cdots =1 \,\rangle, 
\end{equation*}
such that each $C_{ij}$ has order $2$ (for $0 \le j \le s_i-1$), 
each $C_{ij-1}C_{ij}$ has order $n_{ij}$ (for $1 \le j \le s_i-1$) 
and $C_{is_i-1}C_{i0}$ has order $n_{is_i}$, if $s_i\ge3$ (see~\eqref{G(n_1,...,n_s)}), 
or more simply ${\rm C}_2 = \langle\, C_0 \ | \ C_0^{\,2}=1 \,\rangle$ if $s_i=0,$  for $i = 1$ and $2$.
Note here that when $s_i > 0$, the subgroups $\langle C_{ij-1},C_{ij} \rangle$ 
and $\langle C_{is_i-1},C_{i0} \rangle$ are dihedral, of orders $2n_{ij}$ (for $1 \le j \le s_i-1$) 
and $2n_{is_i}$, for each $i$. 

We may define $\phi_1:\Gamma\to G_{(n_{11},\ldots,n_{1s_1})}$ by setting $%
\phi_1(c_{1j})=C_{1j}$ for $0\le j\le s_1-1,$ and $\phi(c_{1s_1}) = C_{10}$, and $\phi_1(e_1)=%
\phi_1(e_2)=C_{10},$ and $\phi_1(x_i)=1$ for every remaining canonical generator $y$ of $\Gamma$  
(namely $e_i$ for $3 \le i \le k$, and $x_i$ for $1 \le i \le r$, and every $c_{ij}$ with $2 \le i \le k$).
This makes $\phi_1$ an epimorphism (whether $s_1=0$ or $s_1\ge3)$,  
and its kernel $\ker\phi_1$ contains no non-trivial element 
of $\langle c_{1j},c_{1j+1} \rangle \cong {\rm D}_{n_{1j+1}}$ for any $j \ge 0$,  
nor any conjugate of such an element, 
and also $\ker\phi_1$ contains the orientation-reversing element $c_{10}e_1.$

We may also define $\phi_2:\Gamma\to G_{(n_{21},\ldots,n_{2s_2})}$ by setting $%
\phi_2(c_{2j})=C_{2j}$ for $0\le j\le s_2-1,$ and $\phi(c_{2s_2}) = C_{20}$, 
and $\phi_2(z)=1$ for every remaining canonical 
generator $z$ of $\Gamma$ (namely $e_i$ for $1 \le i \le k$, and $x_i$ for $1 \le i \le r$, 
and every $c_{ij}$ with $i \ne 2$).
This makes $\phi_2$ a homomorphism (whether $s_2=0$ or $s_2\ge3)$, 
and its kernel $\ker\phi_2$ contains no non-trivial element 
of $\langle c_{2j},c_{2j+1} \rangle \cong {\rm D}_{n_{2j+1}}$ for any $j \ge 0$, 
nor any conjugate of such an element, 
and again $\ker\phi_2$ contains the orientation-reversing element $c_{10}e_1.$

For the remaining period cycles $\mathcal{C}_i$ for $3 \le i \le k$ (if any), we may define the
following homomorphisms according to the number $s_i$ of link periods of $\mathcal{C}_i.$

If $\mathcal{C}_i=(n_{i1},\ldots,n_{is_i})$ with $s_i\ge3$ or $s_i=0$, 
then we may take $G = G_{(n_{i1},\ldots,n_{is_i})}$ and define a homomorphism $\phi_i:\Gamma\to G$ 
in exactly the same way as we defined $\phi_2$ above, so that $\ker\phi_i$ contains no non-trivial element 
of $\langle c_{ij},c_{ij+1} \rangle \cong {\rm D}_{n_{ij+1}}$ for any $j \ge 0$, 
nor any conjugate of such an element, 
and again $\ker\phi_i$ contains $c_{10}e_1.$

If $\mathcal{C}_i=(n_{i1},n_{i2})$, 
we may take $G = \mathrm{D}_{2n_{1}n_{2}}=\langle\, u,v \ | \ u^2=v^2=(uv)^{2n_{1}n_{2}}=1\,\rangle$ 
and define a homomorphism $\psi_i:\Gamma\to G$ 
such that $\psi_i(e_i)=(uv)^{n_2-n_1}$ and $\psi_i(e_2) = (uv)^{n_1-n_2}$, 
where $e_2$ is the hyperbolic generator associated with the period cycle $\mathcal{C}_2,$
and $\psi_i(c_{i0})=u(uv)^{2n_2}$, $\,\psi_i(c_{i1})=u$ and $\,\psi_i(c_{i2})=u(uv)^{2n_1}$, 
and $\psi(z)=1$ for every remaining canonical generator $z$ of $\Gamma.$
Then $\ker\psi_i$ contains no non-trivial element 
of $\langle c_{ij},c_{ij+1} \rangle \cong {\rm D}_{n_{ij+1}}$ for any $j \ge 0$, 
nor any conjugate of such an element, 
and again $\ker\psi_i$ contains $c_{10}e_1.$

If $\mathcal{C}_i=(n_{i1})$, 
we may take $G = \mathrm{D}_{2n_{i1}}=\langle\, u,v \ | \  u^2=v^2=(uv)^{2n_{i1}}=1 \,\rangle$ 
and define a homomorphism $\tau_i:\Gamma\to G$ 
such that  $\tau_i(e_i)=v$ and $\tau_i(e_2) = v$, and $\tau_i(c_{i0})=u$ and $\tau_i(c_{i1})=vuv$, 
and $\tau_i(z)=1$ for every remaining generator $z$ of $\Gamma.$ 
Then $\ker\tau_i$ contains no non-trivial element of $\langle c_{i0},c_{i1} \rangle \cong {\rm D}_{n_{i1}}$, nor any conjugate of such an element, 
and again $\ker\tau_i$ contains $c_{10}e_1.$

\medskip

Suppose now that the number $r$ of proper periods of $\sigma(\Gamma)$ is greater than $2$. 
Then there exists a finite group $G_{[m_1,\ldots,m_r]}$ with partial presentation
\begin{equation*}
G_{[m_1,\ldots,m_r]} = \langle X_1,\ldots, X_r\mid X_1^{m_1}=\cdots
=X_r^{m_r}=X_1\cdots X_r=\cdots =1\rangle,
\end{equation*} 
such that $X_i$ has order $m_i$ for $1 \le i \le r$; see \eqref{G[m_1,...,m_r]}. 
In this case we define a homomorphism $\xi:\Gamma\to G_{[m_1,\ldots,m_r]}$
such that $\xi(x_i)=X_i$ for $i=1,\ldots,r$ and $\xi(w)=1$ for every other canonical 
generator $w$ of $\Gamma$ (including every $a_i$ and $b_i$, every $e_i$, and every $c_{ij}$). 
Then $\ker\xi$ contains no non-trivial element of  $\langle x_i \rangle \cong {\rm C}_{m_i}$,  
nor any conjugate of such an element, for $1 \le i \le r$, 
and $\ker\xi$ contains the orientation-reversing element $c_{10}e_1.$

\medskip

If $r=2$, then more care is needed. 
Let $G_{[m_1,m_2,2]}$ be a finite smooth quotient of the ordinary $(2,m_1,m_2)$ triangle group, 
generated by three elements $X_1,$ $X_2$ and $X_3$ of orders $m_1,$ $m_2$ and $2$ respectively, 
such that $X_1X_2X_3=1.$ 
Now take $G$ as the direct product $G_{[m_1,m_2,2]}\times G_{(n_{11},\ldots,n_{1s_1})},$ 
where $G_{(n_{11},\ldots,n_{1s_1})}$ is as used previously,  
and define the homomorphism $\xi:\Gamma\to G$ by setting $\xi(x_1)=X_1$ and $\xi(x_2)=X_2,$ 
and $\xi(c_{1j})=C_{1j}$ for $0 \le j < s_1$, and $\xi(e_1)=C_{10}$ and $\xi(e_2)=C_{10}X_3$, 
where $e_2$ is the hyperbolic generator associated with the period cycle $\mathcal{C}_2,$ 
and $\xi(w)=1$ for every other canonical generator $w$ of $\Gamma.$
Then $\ker\xi$ contains no non-trivial element 
of  $\langle x_i \rangle \cong {\rm C}_{m_i}$ for $i \in \{1,2\}$,
or $\langle c_{1j} \rangle \cong {\rm C}_2$ 
or $\langle c_{1j},c_{1j+1} \rangle \cong {\rm D}_{n_{1j+1}}$ for any $j \ge 0$, 
nor any conjugate of such an element, 
and again $\ker\xi$ contains $c_{10}e_1.$

\medskip

Similarly, if $r=1$, then let $G_{[m_1,2,2]}$ be 
$\mathrm{D}_{m_1}=\langle\, u,v \ | \  u^2=v^2=(uv)^{m_1}=1 \,\rangle$, 
and take $(X_1,X_2,X_3) = (uv,v,u)$, with orders $m_1,$ $2$ and $2$ respectively, 
such that $X_1X_2X_3=1,$  
and take $G$ as the direct product 
$G_{[m_1,2,2]}\times G_{(n_{11},\ldots,n_{1s_1})}$ where $G_{(n_{11},\ldots,n_{1s_1})}$.  
This time define the homomorphism $\xi:\Gamma\to G$ by setting 
$\xi(x_1)=X_1,$ and $\xi(c_{1j})=C_{1j}$ for $0 \le j < s_1$,  
and $\xi(e_1)=C_{10}$ and $\xi(e_2)=C_{10}X_2X_3$, 
and $\xi(w)=1$ for every other canonical generator $w$ of $\Gamma.$
Then $\ker\xi$ contains no non-trivial element of $\langle x_1 \rangle \cong {\rm C}_{m_1}$,
or $\langle c_{1j} \rangle \cong {\rm C}_2$ 
or $\langle c_{1j},c_{1j+1} \rangle \cong {\rm D}_{n_{1j+1}}$ for any $j \ge 0$, 
nor any conjugate of such an element, 
and again $\ker\xi$ contains $c_{10}e_1.$

\medskip

Here we note that the kernel of every one of the homomorphisms $\xi$, $\phi_i$, $\psi_i$, $\tau_i$ and $\xi$ 
defined above has finite index in $\Gamma$, is torsion-free, and contains the orientation-reversing 
element $c_{10}e_1.$

\medskip

Finally, consider the intersection of the kernels of the homomorphisms 
$\xi$, $\phi_1$, $\phi_2$ and either $\phi_i$ or $\psi_i$ or $\tau_i$, for each $i \in \{3,\dots,k\}$, 
which we will write loosely as 
\begin{equation*}
N = \Big(\bigcap_{i}\ker\phi_i\Big)\cap \Big(\bigcap_{i}\ker\psi_i\Big)\cap \Big(\bigcap_i\ker\tau_i\Big)\cap \ker\xi. 
\end{equation*}
Then $N$ is a normal subgroup of $\Gamma$ which is torsion-free, 
as it contains no conjugate of any non-trivial element of any torsion subgroup of $\Gamma$. 
Moreover, $N$ has finite index in $\Gamma,$ by Lemma \ref{intersection.finite.index}. 
Also it contains the product $c_{10}e_1,$ which is an orientation-reversing element of $\Gamma,$ 
and so this normal group satisfies the conditions in the statement of the theorem.
\end{proof}

Unfortunately we have not been able to resolve the NEC version of Fenchel's conjecture
in the remaining case where $k_0+k_3 \le 1$, so we leave open the following: 

\begin{question}
\label{question.k>1} 
If the NEC group $\Gamma$ has signature $(0;+;[m_1,...,m_r]; \{\mathcal{C}_1,...,\mathcal{C}_k\})$ 
for some $k > 0$, with $k_0+k_3 = 0$ or $1$ (where $k_0$ is the number of empty period cycles $(-),$ 
and $k_{3}$ is the number of period cycles with three or more link periods), 
then does $\Gamma$ have a torsion-free normal subgroup of finite index 
that contains orientation-reversing elements?
\end{question}

The case {$k=1$}, with $\sigma(\Gamma)=(0;+;[m_1,\ldots,m_r];\{(n_1,%
\ldots,n_s)\})$, is perhaps the most challenging. 
For this one, we have solved some particular subcases, as listed in the theorem below.
Recall here that if $\Gamma$ is a proper NEC group, then 
the expression $\mu(\Gamma)$ given in \eqref{area} is positive.	

\begin{theorem}
\label{theorem.non-orientable.g=0.k=1.solved.cases} Let $\Gamma $ be a
proper NEC group with signature $(0;+;[m_{1},\ldots ,m_{r}];\{(n_{1},\ldots,n_{s})\}$. 
Then $\Gamma $ has a torsion-free normal subgroup of finite index in $\Gamma $ and containing orientation-reversing elements in
the cases listed in Table $\ref{table.g=0.k=1.solved.cases}$.
\end{theorem}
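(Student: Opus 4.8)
The plan is to handle each row of Table~\ref{table.g=0.k=1.solved.cases} by exhibiting an explicit finite group $G$ together with an epimorphism $\phi:\Gamma\to G$ whose kernel is torsion-free and contains an orientation-reversing element; since $\Gamma$ has no hyperbolic generators $a_i,b_i$ and no glide reflections (the sign is ``$+$'' with $g=0$), every orientation-reversing element of $\Gamma$ is a word with an odd number of occurrences of the reflections $c_j$, and the natural candidate is a product $c_0e$ or, when $e$ can be killed, simply one of the $c_j$ together with a reflection that survives; in any case the element $c_0e_1$ (with $e_1=e$ the boundary-connecting generator and $c_s=ec_0e^{-1}$) is orientation-reversing, so it suffices to arrange $\phi(c_0e_1)\ne 1$ while keeping $\ker\phi$ torsion-free. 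The torsion-freeness requirement is exactly that $\phi$ be injective on each cyclic subgroup $\langle x_i\rangle$ of order $m_i$, on each $\langle c_{j-1},c_j\rangle\cong\mathrm{D}_{n_j}$ (so in particular on each $\langle c_j\rangle\cong\mathrm{C}_2$ and on the ``wrap-around'' dihedral group $\langle c_{s-1},c_0\rangle$ coming from the relation $c_s=ec_0e^{-1}$), and on all of their conjugates — equivalently, $\phi$ maps $x_i$ to an element of order exactly $m_i$, each $c_j$ to an involution, and each consecutive product $c_{j-1}c_j$ (cyclically) to an element of order exactly $n_j$.

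The key steps, in order, would be: first, reduce via the relation $X_1\cdots X_r\,E=1$ (here $E=\phi(e_1)$) and $\phi(c_s)=\phi(e_1)\phi(c_0)\phi(e_1)^{-1}$, which together mean that once the images of $x_1,\dots,x_r$ and $c_0,\dots,c_{s-1}$ are chosen, $\phi(e_1)$ is forced to be $(X_1\cdots X_r)^{-1}$ and then one must check the single compatibility condition $C_s=E\,C_0\,E^{-1}$, where $C_s$ is determined by the period-cycle relations $(C_{s-1}C_0')^{n_s}=1$ with $C_0'$ playing the role of $C_s$'s partner. Second, for each listed subcase pick $G$ from the toolbox already in play in this paper — cyclic groups, dihedral groups $\mathrm{D}_N$, the small exceptional rotation groups $\mathrm{A}_4,\mathrm{S}_4,\mathrm{A}_5$, the finite triangle groups $\Delta(n_1,n_2,n_3)$ and $G_{[m_1,\dots,m_r]}$, $G_{(n_1,\dots,n_s)}$ from Notation~\ref{G[m_1,...,m_r],G(n_1,...,n_s)}, and direct products of these — and specify the images of $x_1,\dots,x_r$ and $c_0,\dots,c_{s-1}$ by formulas analogous to cases (a)--(m) in the proof of Theorem~\ref{theorem.non-orientable.g>0}; a product group is used whenever the proper-period part and the link-period part cannot be realised inside one small group simultaneously. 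Third, verify in each case the three bullet points above (orders of $x_i$-images, involutivity of $c_j$-images, orders of the consecutive products including the cyclic one), the compatibility $C_s=EC_0E^{-1}$, and that $\phi(c_0e_1)=C_0(X_1\cdots X_r)^{-1}\ne 1$; finally take $N=\ker\phi$, which is normal of finite index $|G|$ by construction, torsion-free because no conjugate of a nontrivial torsion element survives, and contains the orientation-reversing $c_0e_1$.

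The main obstacle — the reason only some subcases are resolved — is the interaction between the period-cycle structure and the relation $c_s=ec_0e^{-1}$ when $s$ is odd: an orientation-reversing word in the $c_j$'s must have odd length, but a ``standard'' target image built from alternating involutions in a dihedral group tends to force $\phi(c_0e_1)$ into the rotation subgroup or even to $1$, and one cannot always twist $\phi(e_1)$ to fix this while keeping the orders of $x_i$, $c_{j-1}c_j$ and $c_{s-1}c_0$ all exactly correct; when $r$ is small (especially $r\le 2$) there is too little freedom in $(X_1\cdots X_r)^{-1}$ to simultaneously kill the torsion and witness orientation-reversal, which is precisely why the general $k=1$, $k_0+k_3\le 1$ case is left open in Question~\ref{question.k>1}. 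So the subcases in Table~\ref{table.g=0.k=1.solved.cases} are exactly those where an explicit algebraic choice threads this needle, and I would organise the proof as a case-by-case table of group $G$ and generator images, mirroring the format of the proof of Theorem~\ref{theorem.non-orientable.g>0}, with each row accompanied by the short verification of orders and of $\phi(c_0e_1)\ne 1$.
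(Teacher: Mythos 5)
There is a genuine gap: your proposal is a description of a proof strategy, not a proof. The entire content of this theorem is the ten explicit constructions, one for each row of Table~\ref{table.g=0.k=1.solved.cases}, and you supply none of them. Saying that one should ``pick $G$ from the toolbox \dots and specify the images by formulas analogous to cases (a)--(m) in the proof of Theorem~\ref{theorem.non-orientable.g>0}'' does not work even as a deferral, because those formulas exploit $g>0$: there the witness is $a_1c_{10}$ with $\phi(a_1)=\phi(c_{10})=u$, and the free generator $a_1$ (absent here) is what makes every subcase easy. With $g=0$ and $k=1$ the long relation forces $\phi(e_1)=(X_1\cdots X_r)^{-1}$, exactly as you note, and that rigidity is why each row of the table needs its own tailor-made group (direct products $G_{[m_1,\ldots,m_r,2]}\times G_{(\ldots)}$, dihedral groups of carefully chosen order, a $(2,m_1,3)$-triangle-group quotient in the $r=1$ subcase of $(2,2)$, etc.) and its own witness, which in the paper's argument varies from $e_1c_{10}$ to $x_1c_0(c_0c_1)^{\alpha_1}$, $(c_{10}e_1^{-1})^3$, $c_{10}c_{11}c_{12}$, $e_1^{m/2}c_{11}$ and $(c_{10}c_{12})^{n_3/2}c_{11}$. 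Until those choices are written down and their order conditions checked, the theorem is not proved.

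There is also a concrete error in the one verification step you do state: you twice assert that it suffices to arrange $\phi(c_0e_1)\neq 1$, and propose to check $\phi(c_0e_1)=C_0(X_1\cdots X_r)^{-1}\neq 1$. This is backwards. To exhibit an orientation-reversing element of $\ker\phi$ you need an orientation-reversing word $w$ with $\phi(w)=1$; for the candidate $c_0e_1$ the required condition is $\phi(c_0e_1)=1$, i.e.\ $C_0=X_1\cdots X_r$ (compare Case~1 of the paper's proof, where $\phi(e_1)=\phi(c_{10})=X_{r+1}$ is an involution so that $\phi(e_1c_{10})=1$). As written, your check would establish precisely nothing about $\ker\phi$ containing orientation-reversing elements. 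Your reduction of torsion-freeness to faithfulness on the canonical finite subgroups $\langle x_i\rangle$ and $\langle c_{j-1},c_j\rangle$ (cyclically) and their conjugates is correct and is indeed what the paper uses implicitly, but that is the routine half of the argument; the substance lies in the case-by-case constructions you have omitted.
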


${}$\\[-24pt]

\begin{table}[hhh]
\renewcommand{\arraystretch}{1.5}
\par
\begin{center}
\begin{tabular}{l|l}
\multicolumn{1}{c|}{\quad $(n_1,\ldots,n_s)$} & \multicolumn{1}{c}{$[m_1,\ldots,m_r]$} 
\\ \hline\hline
$s=0$ \quad $(-)$ & \ Any 
\\ \hline
$s=1$ \quad $(n)$, \ for odd $n > 2$ \qquad & \ $[2,\ldots,2]$ 
\\ \hline
$s=2$ \quad $(2,2)$ & \ Any 
\\ \hline
$s=2$ \quad $(n,n)$ & \ $r\ge3$, \ with at least one $m_i$ even \quad 
\\ \hline
$s\ge3$ \quad $(2,2,n_3,\ldots,n_s)$ & \ $r\ge2$ 
\\ \hline
$s\ge4$ \quad $(2,2,n_3,\ldots,n_s)$ & \ $r=0$ 
\\ \hline
$s=3$ \quad $(2,2,n_3)$, with $n_3$ even if $m_1$ is odd \ & \ $r=1$, \ with $m_1$ even if $n_3$ is odd 
\\ \hline
$s\ge4$ \quad $(2,2,n_3,\ldots,n_s)$  & \ $r=1$ 
\\ \hline
$s\ge3$ \quad any & \ $r\ge2$, \ with at least one $m_i$ even \quad 
\\ \hline
$s\ge3$ \quad any & \ $r=1,$ \ with $m_1\equiv 2$ mod $4$ \quad 
\\ \hline
\end{tabular}%
${}$\\[-30pt]
\end{center}
\caption{Some cases with $g=0$ and $k=1$ in which the NEC form of Fenchel's conjecture holds}
\label{table.g=0.k=1.solved.cases}
\end{table}

\begin{proof}
For each of the ten cases listed in Table~\ref{table.g=0.k=1.solved.cases}, 
we define a homomorphism $\phi $ from the corresponding NEC group $\Gamma $ to some finite group $G$, 
with the property that $\ker \phi $ is torsion-free and contains orientation-reversing elements. 

\bigskip

\noindent {\bf Case 1}: \ 
$\sigma (\Gamma)=(0;+;[m_{1},\ldots ,m_{r}];\{(-)\})$ 

\smallskip

As $s=0$, we need $r>2$ or $r=2$ with $[m_{1},m_{2}]\neq [2,2]$ in order for  $\mu(\Gamma)$ to be positive, 
and in both cases, $r+1\geq 3$.
Now let $G$ be a finite group $G_{[m_{1},\ldots ,m_{r},2]}$ with partial presentation
\begin{equation*}
G_{[m_{1},\ldots ,m_{r},2]}=\langle X_{1},\ldots ,X_{r},X_{r+1}\mid
X_{1}^{m_{1}}=\cdots =X_{r}^{m_{r}}=X_{r+1}^{\,2}=X_{1}\cdots
X_{r}X_{r+1}=\cdots =1\rangle ,
\end{equation*}%
as in \eqref{G[m_1,...,m_r]}, 
and define the homomorphism $\phi :\Gamma \rightarrow G$ by setting 
$\phi(x_{i})=X_{i} $ for $1\leq i\leq r$ and $\phi (e_1)=\phi (c_{10})=X_{r+1}$, 
so that $\ker\phi$ contains the orientation-reversing element $e_1c_{10}$.

\bigskip

\noindent {\bf Case 2}: \ 
$\sigma (\Gamma)=(0;+;[2,\ldots ,2];\{(n)\})$ for odd $n > 2$

\medskip

Let $G = D_{n}=\langle\, u,v \ | \ u^2 = v^2 = (uv)^n = 1 \,\rangle$, 
and define the homomorphism $\phi :\Gamma \rightarrow G$ by setting 
 $\phi (c_{10})=u$ and $\phi(c_{11})=v,$ and $\phi (e_1)=(uv)^{(n+1)/2}$,  
and $\phi(x_i) =u(uv)^{\alpha _{i}}$ for $1 \le i \le r$, where the integers $\alpha_i$ 
are chosen so that that the long relation $x_{1}x_{2}\dots x_{r}e_1 = 1$ is preserved.
Then $\ker\phi$ contains the orientation-reversing element $x_{1}c_{0}(c_{0}c_{1})^{\alpha _{1}}$.

\bigskip

\noindent {\bf Case 3}: \ 
$\sigma (\Gamma)=(0;+;[m_{1},\ldots ,m_{r}];\{(2,2)\})$  

\medskip

When $r=1$, let $G$ be the direct product $H \times {\rm C}_2$, 
where $H$ is a finite smooth quotient of the ordinary $(2,m_1,3)$ triangle group, 
generated by elements $X_{1},$ $X_{2}$ and $X_{3}$ of orders $2, m_1$ and $3$ 
such that $X_{1}X_{2}X_{3}=1$, and ${\rm C}_{2}=\langle u \rangle$, 
and define the homomorphism $\phi :\Gamma \rightarrow G$ by setting 
$\phi (x_1)=X_{2}$ and $\phi (e_1)=X_{2}^{-1}$, 
and $\phi (c_{10})=X_{1}$ and $\phi(c_{11})=u$ 
(and also $\phi(c_{12}) = \phi(e_{1}c_{10}e_{1}^{-1}) = (X_{3}X_{2})^{-1})$, 
so that $\phi(c_{12}c_{10})$ has order $2$). 
In this case $\phi(c_{0}e_{1}^{-1}) = X_{1}X_{2} = X_{3}^{-1}$, 
and so $\ker\phi$ contains the orientation-reversing element  $(c_{10}e_{1}^{-1})^{3}.$

When $r\geq 2$, let $G$ be the direct product $H \times {\rm C}_2$, 
where $H = G_{[m_{1},\ldots ,m_{r},2]}$ is the finite group with partial presentation as in 
Case 1 above, 
and again ${\rm C}_{2}=\langle u \rangle$,  
and define the homomorphism $\phi :\Gamma \rightarrow G$ by setting 
$\phi (x_{i})=X_{i}$ for $1 \le i \le r$, and  $\phi(e_1)=X_{r+1}$, 
and $\phi (c_{10})=u$ and $\phi (c_{11})=X_{r+1}$. 
Then $\ker\phi$ contains the orientation-reversing element $e_{1}c_{11},$ 
since $X_{r+1}^{\, 2} = 1$. 

\bigskip

\noindent {\bf Case 4}: \ 
$\sigma (\Gamma )=(0;+;[m_{1},\ldots ,m_{r}];$ $\{(n,n)\})$ for $r\geq 3$,  with at least one $m_{i}$ even

\smallskip

In this case, up to an automorphism of $\Gamma $ we may assume that $m_{r}$ is even. 
Now take $G$ as the direct product $H \times \mathrm{D}_{n}$, 
 where $H$ is a finite group $G_{[m_{1},\ldots ,m_{r}]}$ with partial presentation:
\begin{equation*}
G=\langle\, X_{1},\ldots ,X_{r} \ | \ X_{1}^{m_{1}}=\cdots
=X_{r}^{m_{r}}=X_{1}\cdots X_{r}=\cdots =1 \,\rangle,
\end{equation*}
as in \eqref{G[m_1,...,m_r]}, 
and $\mathrm{D}_{n}=\langle\, u, v \ | \ u^2 = v^2 = (uv)^{n}=1 \,\rangle $. 
and define the homomorphism $\phi :\Gamma \rightarrow G$ by setting 
$\phi (x_{j})=X_{j}$ for $1\leq j \le r-1$ and $\phi (x_{r})=X_{r}u$, 
and $\phi (e_1)=u,$ and $\phi(c_{10})=u$, $\phi (c_{11})=v$ and $\phi (c_{12})=u.$
Then $\ker\phi$ contains the orientation-reversing element $e_{1}c_{10}.$ 

\bigskip

\noindent {\bf Case 5}: \ 
$\sigma (\Gamma)=(0;+;[m_{1},\ldots ,m_{r}];$ $\{(2,2,n_{3},\ldots,n_{s})\})$ for $r\geq 2$ and $s\geq 3$ 

\medskip

Here we could have assumed that the single period cycle is $(n_{1},\ldots ,n_{i-1},2,2,n_{i+2},\ldots,n_{s})$, 
with two successive $2$s, but up to an automorphism of the NEC group $\Gamma$, we may 
take it with the given form. 

In this case, let $H$ be a finite group $G_{[m_{1},\ldots ,m_{r},2]}$ with partial presentation as in 
Case 1 above,
and let $J$ be a  finite group $G_{(2,2,n_{3},\ldots ,n_{s})}$ with partial presentation
\begin{equation}  \label{G(n_1,...,n_s)}
G_{(n_1,...,n_s)} \!= \!\langle\, C_0,..., C_{s-1} \ | \ C_i^{\,2}\!=\!
(C_0C_1)^{n_1} \!= \! \cdot\cdot\cdot \!=\!(C_{s-2}C_{s-1})^{n_{s-1}} \!=\!
(C_{s-1}C_0)^{n_s} = \cdot\cdot\cdot =1\,\rangle.
\end{equation}
as in \eqref{G(n_1,...,n_s)}, with $(n_1,n_2) = (2,2)$, 
and take $G$ as the direct product $H\times J$. 
Now define the homomorphism $\phi :\Gamma \rightarrow G$ by setting 
$\phi (x_{j})=X_{j}$ for $1\leq j\leq r,$ and  $\phi(e_1)=X_{r+1}$, 
and $\phi (c_{10})=C_{0},$ \ $\phi (c_{11})=X_{r+1}$, and $\phi(c_{1i})=C_{i}$ for $2\leq i\leq s-1,$  
and $\phi(c_{1s}) = C_0$. 
Then $\ker\phi$ contains the orientation-reversing element $e_{1}c_{11}.$

\bigskip

\noindent {\bf Case 6}: \ 
$\sigma (\Gamma)=(0;+;[-];\{(2,2,n_{3},\ldots ,n_{s})\})$ for $s\geq 4$ 

\medskip

As above, we could have assumed that the period cycle is $(n_{1},\ldots ,n_{i-1},2,2,n_{i+2},\ldots,n_{s})$, 
but we may take it with the given form.  
Note also that $s \ge 4$, in order to ensure that $\mu(\Gamma) > 0$. 

Here we can take $G$ as a finite group $G_{(2,n_{3},\ldots ,n_{s})}$ 
generated by $s-1\geq 3$ involutions with partial presentation
\begin{equation*}
\langle\, C_{1},\ldots ,C_{s-1} \ | \ 
C_{i}^{\,2}=(C_{1}C_{2})^{2}=(C_{2}C_{3})^{n_{3}}=\cdots
=(C_{s-2}C_{s-1})^{n_{s-1}}=(C_{s-1}C_{1})^{n_{s}}=\cdots =1\rangle,
\end{equation*}%
and define the homomorphism $\phi :\Gamma \rightarrow G$ by setting 
$\phi(e_1) = 1,\,$ and $\phi(c_{10})=\phi(c_{1s})=C_{1}$ and $\phi(c_{11})=C_{1}C_{2}$,  
and $\phi (c_{1i})=C_{i}$ for $2\leq i\leq s-1.$
In this case $\ker\phi$ contains the orientation-reversing element  $c_{10}c_{11}c_{12}.$
 
\bigskip

\noindent {\bf Case 7}: \ 
$\sigma (\Gamma)=(0;+;[m];\{(2,2,n_{3})\})$ where $m$ or $n_3$ is even

\medskip

Let $G$ be the direct product of ${\rm D}_{n_3}=\langle\, u,v \ | \ u^2 = v^2 = (uv)^{n_3} = 1 \,\rangle$ 
and ${\rm C}_m = \langle\, w \ | \ w^m = 1 \,\rangle$.
If $m$ is even,  then define the homomorphism $\phi :\Gamma \rightarrow G$ by setting $\phi (x_1)=w$, 
$\phi (e_1)=w^{-1}$,  and $\phi (c_{10})=\phi(c_{13})=u,\,$ $\phi (c_{11})=w^{m/2}$ and $\phi (c_{12})=v$. 
In this case $\ker\phi$ contains the orientation-reversing element $e_1^{\,m/2}c_{11}.$
On the other hand, if $n_3$ is even, define the homomorphism $\phi :\Gamma \rightarrow G$ 
by setting $\phi (x_1)=w$, $\phi (e_1)=w^{-1}$,  and $\phi (c_{10})=\phi(c_{13})=u,\,$ 
$\phi (c_{11})=(uv)^{n_3/2}$ and $\phi (c_{12})=v$, 
and then $\ker\phi$ contains the orientation-reversing element $(c_{10}c_{12})^{\,n_3/2}c_{11}.$

\bigskip

\noindent {\bf Case 8}: \ 
$\sigma (\Gamma)=(0;+;[m];\{(2,2,n_{3},\ldots ,n_{s})\})$ for $s\geq 4$ 

\medskip

As above, we could have assumed that the period cycle is $(n_{1},\ldots ,n_{i-1},2,2,n_{i+2},\ldots,n_{s})$, 
but we may take it with the given form.  

Let $G$ be the direct product $H \times {\rm C}_m$ of a finite group $H = G_{(2,n_{3},\ldots ,n_{s})}$ 
with partial presentation  as in 
Case 6 above, 
with ${\rm C}_m = \langle\, w \ | \ w^m = 1 \,\rangle$,   
and define the homomorphism $\phi :\Gamma \rightarrow G$ by setting 
$\phi (x_1)=w$  and $\phi(e_1)=w^{-1}$,  and $\phi (c_{10})=\phi(c_{1s}) =C_{1}$ and $\phi(c_{1i})=C_{1}C_{2},$ 
and $\phi (c_{1i})=C_{i}$ for $2\leq i\leq s-1$. 
Then $\ker\phi$ contains the orientation-reversing element $c_{10}c_{11}c_{12}.$

\bigskip

\noindent {\bf Case 9}: \ 
$\sigma (\Gamma )=(0;+;[m_{1},\ldots ,m_{r}];\{(n_{1},\ldots ,n_{s})\})$ with at
least one $m_{i}$ even, $r\geq 2$ and $s\geq 3$ 

\medskip

In this case, without loss of generality we may assume that $m_{r}$ is even.
(Also note that the case $s=0$ was considered in Case 1 above, for any 
choice of $[m_{1},\ldots ,m_{r}]$.) 

Let $G = H \times J$, 
where $H$ is a finite group $G_{[m_{1},\ldots ,m_{r},2]}$ with partial presentation 
as in 
Case~1 above, 
and $J$ is a finite group $G_{(n_{1},\ldots ,n_{s})}$ with partial presentation
\begin{equation*}
J=\langle\, C_{0},\ldots ,C_{s-1} \ | \ C_{i}^{\,2}
=(C_{0}C_{1})^{n_{1}}=\cdots
=(C_{s-2}C_{s-1})^{n_{s-1}}=(C_{s-1}C_{0})^{n_{s}}=\cdots =1 \,\rangle,  
\end{equation*}%
as  in \eqref{G(n_1,...,n_s)}.
Now define the homomorphism $\phi :\Gamma \rightarrow G$ by setting 
$\phi (x_{j})=X_{j}$ for $1\leq j\leq r-1,$ and $\phi (x_{r})=X_{r}C_{0},$ 
and $\phi (e_1)=X_{r+1}C_{0}$, 
and $\phi (c_{1i})=X_{r+1}C_{i}$ for $0\leq i\leq s-1,$ and also $\phi(c_{1s}) = X_{r+1}C_0$.  
Then $\ker\phi$ contains the orientation-reversing element $e_{1}c_{10}.$

\bigskip

\noindent {\bf Case 10}: \ 
 $\sigma (\Gamma)=(0;+;[m];\{(n_{1},\ldots ,n_{s})\})$ with $m\equiv 2$ mod $4$ and $s\geq 3$

\medskip

Let $G = {\rm C}_m \times J$, 
where ${\rm C}_{m}=\langle\, w \ | \ w^{m}=1 \,\rangle$ 
and $J$ is a finite group $G_{(n_{1},\ldots ,n_{s})}$ with partial presentation as in 
Case 9 above, 
and define the homomorphism $\phi :\Gamma \rightarrow G$ by setting 
$\phi (x_1)=wC_0$, $\phi (e_1)=(wC_0)^{-1}$, 
and $\phi (c_{1i})=w^{m/2}C_{i}$ for $0\leq i < s$, and $\phi (c{_1s})=w^{m/2}C_0$. 
Then since $m/2$ is odd, $\ker\phi$ contains the orientation-reversing element $e^{m/2}c_{10}.$
\end{proof}

In order to get a clearer overview of the results obtained so far, we add to
Table \ref{table.g=0.k=1.solved.cases} the unresolved cases 
for signature $(0;+;[m_{1},\ldots ,m_{r}];\{(n_{1},\ldots,n_{s})\}$, 
in Table \ref{table.g=0.k=1.unresolved.cases}.
\begin{table}[]
\renewcommand{\arraystretch}{1.5}
\par
\begin{center}
\begin{tabular}{l|l}
\multicolumn{1}{c|}{\qquad $(n_1,\ldots,n_s)$} 
& \multicolumn{1}{c}{$[m_1,\ldots,m_r]$} 
\\ \hline\hline
$s=1$ \quad $(n)$ & \ Any, except for $[2,\ldots,2]$ when $n$ is odd \ 
\\ \hline
$s=2$ \quad $n_1 \ne n_2$ & \ Any 
\\ \hline
$s=2$ \quad $(n,n)$ where $n > 2$ \quad & \ $r = 1$ or $2$, or $r \ge 3$ with all $m_i$ odd 
\\ \hline
$s=3$ \quad Any with all $n_i > 2$ \ \ & \ $r=0$  \ 
\\ \hline
$s=3$ \quad $(2,2,n_3)$ for odd $n_3$ \ \ & \ $r=1$ with  $\,m_1$ odd \ 
\\ \hline
$s\ge3$ \quad Any with $(n_1,n_2) \ne (2,2)$ \ \ & \ Any (with $r \ge 0$)   
\\ \hline
$s\ge3$ \quad Any & \ $r=1,$ with $\,m_1\not\equiv2$ mod $4$ \ 
\\ \hline
$s\ge3$ \quad Any & \ $r\ge2$, with all $m_i$ odd 
\\ \hline
\end{tabular}
\end{center}
\caption{Summary of unresolved cases when $g=0$ and $k=1$}
\label{table.g=0.k=1.unresolved.cases}
\end{table}

\medskip

Note that there are some natural restrictions on the signatures occurring in Table \ref{table.g=0.k=1.unresolved.cases}.
In the case of signature $(0;+;[m_{1},\ldots ,m_{r}];\{(n)\})$, for example, 
 if $r=1$ then we have to exclude signatures $(0;+;[2];\{(n)\})$ for every $n,$
and $(0;+;[m];\{(n)\})$ for $(m,n)=(3,2)$, $(3,3)$ and $(4,2)$, 
because these are signatures corresponding to spherical or Euclidean crystallographic groups, 
which do not have fundamental regions with positive area.

\newpage 
\section{Some special cases}
\label{Special cases}

\subsection{Comments about signature \protect\boldmath$\protect\sigma(\Gamma)=(0;-;[-];\{(n_1,\ldots,n_s)\})$} 

The special case of the signature $(0;+;[-]; \{(n_1,\ldots,n_s)\})$ is quite a challenging one, 
but also turns out to be quite interesting.
Observe that an NEC group $\Gamma$ with such a signature is generated by reflections, 
say $c_0,\ldots,c_{s-1},$ and then any orientation-reversing element of $\Gamma$ is a word of odd
length in those generators, and so we obtain the following:

\begin{proposition}
\label{(n1,...,ns)} An NEC group $\Gamma$ with signature $%
(0;+;[-];\{(n_1,\ldots,n_s)\})$ has a torsion-free normal subgroup with 
finite index containing orientation-reversing elements if
and only if there exists a finite group $G=G_{(n_1,\ldots,n_s)}$ 
generated by $s$ involutions $C_0,\ldots, C_{s-1}$ such that 
$C_{i-1}C_{i}$ has order $n_i$ for $1 \le i <s$ and $C_{s-1}C_0$ has order $n_s$, 
and the identity element of $G$ is expressible as a word $w$ of odd length in the generators $C_0,C_1,\ldots,C_{s-1}$.
\end{proposition}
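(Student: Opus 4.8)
The plan is to establish the two implications separately, exploiting the fact that the group $\Gamma$ here is generated entirely by the reflections $c_0,\ldots,c_{s-1}$ (there being no hyperbolic generators, no glide reflections, no elliptic elements, and — since $k=1$ — exactly one empty orientation-preserving generator $e_1$ that can be eliminated). Write $\Gamma^+$ for the canonical Fuchsian subgroup of index $2$; the orientation-reversing elements of $\Gamma$ are precisely the words of odd length in the $c_i$, as noted just before the statement. The key observation is that a torsion-free normal subgroup $N$ of finite index containing an orientation-reversing element exists if and only if there is a finite quotient $\Gamma \to G$ realising the periods faithfully (so that the kernel is torsion-free) whose kernel meets $\Gamma\setminus\Gamma^+$; and the latter condition says exactly that the image in $G$ of some odd-length word in the generating reflections is trivial.

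First I would prove the ``only if'' direction. Suppose $N \triangleleft \Gamma$ has finite index, is torsion-free, and contains an orientation-reversing element $w$; by the preliminary discussion $w$ may be taken to be a word of odd length in $c_0,\ldots,c_{s-1}$. Set $G = \Gamma/N$, with $C_i$ the image of $c_i$. Since $N$ is torsion-free, no non-trivial power of $c_i$ lies in $N$ and no non-trivial power of $c_{i-1}c_i$ (respectively $c_{s-1}c_0$) lies in $N$ — otherwise $N$ would contain a conjugate of such an element, contradicting torsion-freeness, because every torsion element of $\Gamma$ is conjugate into $\langle c_i\rangle$ or $\langle c_{i-1},c_i\rangle$ by the structure of surface-type and general NEC groups. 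Hence each $C_i$ has order exactly $2$, and $C_{i-1}C_i$ has order exactly $n_i$ (with $C_{s-1}C_0$ of order $n_s$), so $G$ is of the required form $G_{(n_1,\ldots,n_s)}$; and the image of $w$ in $G$ is trivial, giving the desired odd-length word equal to the identity.

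For the ``if'' direction, suppose such a finite group $G = G_{(n_1,\ldots,n_s)}$ exists, with an odd-length word $w$ in the $C_i$ equal to the identity. Define $\phi:\Gamma \to G$ by $\phi(c_i) = C_i$ for $0 \le i \le s-1$ and $\phi(c_{s}) = C_0$ (consistent with $c_s = e_1 c_0 e_1^{-1}$, on setting $\phi(e_1) = 1$); this is well-defined precisely because $G$ satisfies the defining relations $C_i^2 = (C_{i-1}C_i)^{n_i} = (C_{s-1}C_0)^{n_s} = 1$ of the period cycle. Then $\ker\phi$ has finite index $|G|$, and it is torsion-free since the orders of the $C_i$ and of the products $C_{i-1}C_i$ are exactly as prescribed, so $\ker\phi$ contains no conjugate of a non-trivial element of any finite subgroup of $\Gamma$. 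Finally, the odd-length word $w(c_0,\ldots,c_{s-1})$ in $\Gamma$ is an orientation-reversing element lying in $\ker\phi$, so $\ker\phi$ is a subgroup of the required type.

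The only delicate point — and the step I would write most carefully — is the verification in both directions that ``the relevant periods are realised faithfully'' is equivalent to ``$\ker\phi$ is torsion-free.'' This rests on the fact that every element of finite order in $\Gamma$ is conjugate to a power of some $c_i$ or of some $c_{i-1}c_i$ (or of $c_{s-1}c_0$), which follows from the canonical presentation and the classification of torsion in NEC groups; once that is in hand, torsion-freeness of the kernel is exactly the non-degeneracy of the orders of the $C_i$ and their consecutive products, and the rest is immediate. I do not expect any genuine obstacle here — the proposition is essentially a restatement — but this is where the real content lies.
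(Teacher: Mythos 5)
Your proposal is correct and follows essentially the same route as the paper's own proof: take the quotient $\Gamma/N$ in one direction, and in the other define $\theta$ by $c_i\mapsto C_i$, $e_1\mapsto 1$, checking torsion-freeness of the kernel via the fact that every torsion element of $\Gamma$ is conjugate into some $\langle c_i\rangle$ or $\langle c_{i-1},c_i\rangle$, and locating the orientation-reversing element as the odd-length word $w$. The paper states these steps more tersely, but the content is identical.
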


\begin{proof}
It is easy to see that if $\Gamma$ contains such a torsion-free normal subgroup $K$, then $%
\Gamma/K$ is a finite group $G$ with a generating set $\{C_0,C_1,\dots,C_{s-1}\}$ 
of the given form,
Conversely, if there exists such a finite group $G$, 
then the assignments $e_1 \mapsto 1$ and $c_i \mapsto C_i$ for $0\le i < s$ 
extend to an epimorphism $\theta: \Gamma \to G$ whose kernel $K$ has index $|G|$ 
and is torsion-free. 
Also if the identity element of $G$ is expressible as a word $w$ of odd length as given, 
then the corresponding word in the generators $c_0,c_1,\ldots,c_{s-1}$ of $\Gamma$ is an 
orientation-reversing element that lies in $K$.
\end{proof}

Following Singerman's study of H$^*$-groups in \cite{singerman:non-orientable},
we have the following alternative characterisation of the groups $G$ 
that satisfy the condition given in Theorem~\ref{(n1,...,ns)}. 

\begin{corollary}
\label{[n1,...,ns]} 
An NEC group $\Gamma$ with signature $(0;+;[-];\{(n_1,\ldots,n_s)\})$ has a torsion-free 
normal subgroup of finite index containing orientation-reversing elements if
and only if there exists a finite group $G=G_{[n_1,\ldots,n_s]}$ generated
by $s$ elements $X_1,\ldots, X_s$ of respective orders $n_1,\ldots,n_s$ 
such that $X_1\cdots X_s=1$ and that $G$ contains an element $Z$ such that
the element $ZX_1\cdots X_i$ is an involution for $0 \le i < s$. 
\end{corollary}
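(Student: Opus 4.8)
The plan is to derive Corollary~\ref{[n1,...,ns]} from Proposition~\ref{(n1,...,ns)} by translating the ``word of odd length in the reflections'' condition into the language of the orientation-preserving subgroup. Recall that in Proposition~\ref{(n1,...,ns)} we characterised the existence of the desired subgroup by the existence of a finite group $G$ generated by $s$ involutions $C_0,\ldots,C_{s-1}$ with $C_{i-1}C_i$ of order $n_i$ (indices mod $s$), such that the identity of $G$ is a word of odd length in the $C_i$. The natural step is to pass to the index-$2$ subgroup $G^+$ generated by the products $X_i := C_{i-1}C_i$ for $1 \le i < s$ and $X_s := C_{s-1}C_0$; these have orders $n_1,\ldots,n_s$ and satisfy $X_1\cdots X_s = 1$, exactly as in Notation~\ref{G[m_1,...,m_r],G(n_1,...,n_s)}(b). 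Then $G = G^+ \cup C_0 G^+$, and the condition ``the identity is a word of odd length in the $C_i$'' is equivalent to ``$C_0$ lies in $G^+$'' — but that cannot literally hold, so the correct reformulation is that $C_0$ is a product of an odd number of the $C_i$'s, i.e.\ $C_0$ equals a word $w$ of odd length, equivalently $1 = C_0 w$ is a word of even length representing the identity inside a presentation where $C_0$ is \emph{conjugated away}. So instead I would set $Z := C_0$ (an involution) and observe that each partial product $C_0 C_1 \cdots C_i$, being $Z$ times an alternating word, is itself a reflection-type element when $i$ is even and an orientation-preserving element when $i$ is odd; the clean invariant is that $Z X_1 \cdots X_i = C_0(C_0C_1)(C_1C_2)\cdots = C_i$ or $C_0$ depending on parity, and in any case $Z X_1\cdots X_i$ is a \emph{conjugate of one of the $C_j$}, hence an involution — which is precisely the stated condition ``$ZX_1\cdots X_i$ is an involution for $0 \le i < s$''.

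Concretely, the forward direction goes as follows. Given $G$, the $C_i$'s and the odd-length relator $w$ from Proposition~\ref{(n1,...,ns)}, I would put $X_i = C_{i-1}C_i$ and $Z = C_0$, verify that $X_1\cdots X_i = C_0 C_i$ (telescoping), so $Z X_1\cdots X_i = C_0 \cdot C_0 C_i = C_i$ for $1 \le i < s$, and $Z X_1 \cdots X_0 = Z = C_0$ for $i = 0$; every such element is one of the generating involutions, so is an involution, and the $X_i$ have the right orders with $X_1\cdots X_s = C_0 C_s = C_0^2 = 1$ (using $C_s := C_0$). It remains to see that the odd-length relator condition is captured; here I would note that a word of odd length in the $C_i$ being trivial is equivalent, after substituting $C_i = Z X_1\cdots X_i$ (so $C_i = C_{i-1} X_i$ recursively, starting from $C_0 = Z$), to a statement purely about $Z$ and the $X_i$, and conversely any $G$ of the stated form yields such a word — one takes the trivial word of length $1$ given by $C_0 \cdot C_0 = 1$... wait, that is even; the genuine content is that the \emph{group} $G$ generated by the $C_i$ contains $Z$, and $Z$ is automatically a single generator $C_0$, hence an odd-length word, giving the orientation-reversing element $c_0 \in K$ only if... so the real point, which I would make precise, is that $Z$ being an involution in $G$ with $G = \langle C_0,\ldots,C_{s-1}\rangle$ and the partial products being involutions forces the reflection word structure of Singerman's H$^*$-groups, so Proposition~\ref{(n1,...,ns)} applies verbatim.

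For the converse direction, starting from a group $G = G_{[n_1,\ldots,n_s]}$ generated by $X_1,\ldots,X_s$ of the prescribed orders with $X_1\cdots X_s = 1$ and an element $Z$ making all $Z X_1\cdots X_i$ involutions, I would \emph{define} $C_0 := Z$ and $C_i := Z X_1 \cdots X_i$ for $1 \le i \le s-1$; these are involutions by hypothesis, they generate a group containing all the $X_i = C_{i-1}^{-1} C_i = C_{i-1} C_i$, hence containing $G$, and $C_{i-1}C_i = X_i$ has order $n_i$ for $1\le i<s$ while $C_{s-1}C_0 = (Z X_1\cdots X_{s-1})Z = (X_1\cdots X_{s-1}) \cdot (Z^{-1} Z) \cdot$ — I would compute $C_{s-1}C_0 = Z X_1\cdots X_{s-1} Z$ and, since $Z$ is an involution and $X_1\cdots X_{s-1} = X_s^{-1}$, this is $Z X_s^{-1} Z$, a conjugate of $X_s^{-1}$, so of order $n_s$. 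Thus $G$ (enlarged by $Z$, which in fact lies in the group generated by the $C_i$) is a valid $G_{(n_1,\ldots,n_s)}$, and $C_0 = Z$ is a length-one, hence odd-length, word representing... not the identity, but $Z$; the identity is then $Z \cdot C_0 = Z^2 = 1$? that is length two. The odd-length trivial word is $C_0 C_0 C_0 = C_0$, still not $1$ — so here I must instead invoke that $Z \in \langle C_1,\ldots,C_{s-1}\rangle$ is \emph{not} needed: the orientation-reversing element landing in the kernel is $c_0$ itself mapped to $C_0 = Z$, which is trivial in $G$ precisely when $Z = 1$; but if $Z \ne 1$ we instead use that $Z$ equals \emph{some} even-length word in the $C_i$ composed with one more $C_j$. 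The cleanest route, which I would follow, is to bypass this bookkeeping: apply Singerman's characterisation from \cite{singerman:non-orientable} directly, since the displayed condition on $Z$ and the partial products $Z X_1 \cdots X_i$ is exactly Singerman's criterion for an H$^*$-group structure, and then invoke Proposition~\ref{(n1,...,ns)}.

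The main obstacle I anticipate is getting the parity bookkeeping exactly right — specifically, pinning down precisely why ``the identity is a word of odd length in the $C_i$'' (Proposition~\ref{(n1,...,ns)}) is equivalent to ``there exists $Z$ with all $Z X_1\cdots X_i$ involutions'' (Corollary~\ref{[n1,...,ns]}), without circular reasoning. The resolution is that an odd-length word equal to $1$ in $G$ is, after pairing consecutive letters into the $X_i$, a relation expressing one leftover reflection $C_j$ (equivalently $Z$, up to conjugacy and relabelling the cycle so $j = 0$) as a product of $X_i$'s; conversely such a $Z$ yields $C_0 = Z = (\text{even word})$, so $1 = Z \cdot (\text{even word})^{-1}\cdot$ — the parities match once one is careful that $C_0 = Z$ is the ``extra'' generator. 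I would present this equivalence as the technical heart of the proof, citing \cite{singerman:non-orientable} for the clean form of the condition, and keep the verifications of orders and of $X_1\cdots X_s = 1$ as routine telescoping calculations.
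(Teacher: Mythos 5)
There is a genuine gap at what is in fact the technical heart of the corollary, and you half-notice it yourself but never close it. Your dictionary $C_i=ZX_1\cdots X_i$, $X_i=C_{i-1}C_i$, the telescoping, and the order computations (including $C_{s-1}C_0=ZX_s^{-1}Z$) are all fine and match the paper's. What you never establish is the equivalence between the odd-length-relator condition of Proposition~\ref{(n1,...,ns)} and the condition that $Z$ \emph{lies in the group generated by} $X_1,\ldots,X_s$ --- and that containment is exactly what the corollary asserts when it says ``$G=G_{[n_1,\ldots,n_s]}$ generated by $X_1,\ldots,X_s$ \ldots\ contains an element $Z$''. Without the orientation-reversing element in the kernel, the subgroup $\langle X_1,\ldots,X_s\rangle$ could have index $2$ in $\langle C_0,\ldots,C_{s-1}\rangle$ and would then \emph{not} contain $Z=C_0$; so in the forward direction the claim ``$Z\in G$'' is precisely where the odd-length word must be used, and your argument never uses it (you drift into ``$Z$ is automatically a single generator $C_0$, hence an odd-length word'', which conflates $Z\in\langle C_i\rangle$, which is trivial, with $Z\in\langle X_i\rangle$, which is the content). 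The fix is the paper's: if $w$ is an odd-length word in the reflections lying in $K$, pair off its letters after the first to see that its first letter $C_{i_0}$ is a product of elements $C_iC_j$, hence lies in $\langle X_1,\ldots,X_s\rangle$; since every $C_j$ differs from $C_{i_0}$ by a product of $X_i$'s, the whole group equals $\langle X_1,\ldots,X_s\rangle$ and in particular contains $Z=C_0$.

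The converse has the mirror-image gap. You correctly sense that ``$c_0\mapsto Z$ with $Z\neq 1$'' does not by itself put an orientation-reversing element in $\ker\theta$, but your resolution is to ``bypass this bookkeeping'' by citing Singerman, which is an appeal to authority where a two-line argument is needed: since $G=\langle X_1,\ldots,X_s\rangle=\langle X_1,\ldots,X_{s-1}\rangle$ (because $X_s=(X_1\cdots X_{s-1})^{-1}$), write $Z=w(X_1,\ldots,X_{s-1})^{-1}$; then $c_0\,w(c_0c_1,\ldots,c_{s-2}c_{s-1})$ is a word of \emph{odd} length in the reflections (one letter plus an even-length tail) whose image is $Z\,w(X_1,\ldots,X_{s-1})=1$, so it is an orientation-reversing element of $\ker\theta$. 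Your overall strategy --- routing everything through Proposition~\ref{(n1,...,ns)} and the $C_i\leftrightarrow Z,X_i$ dictionary --- is the same as the paper's; the proof is incomplete only because this ``$Z$ is an even-length word in the reflections'' step, in both directions, is left as an unproved assertion.
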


\begin{proof}
Suppose $\Gamma$ contains such a normal subgroup, say $K$.  
Then $\Gamma/K$ is a finite group $G$ generated by $s$ involutions $C_0,\ldots,C_{s-1}$ 
such that the element $X_i:=C_{i-1}C_i$ has order $n_i$ for $1\le i\le s-1$ 
and $X_s:=C_{s-1}C_0$ has order $n_s,$ and $X_1\cdots X_s = 1$. 
Moreover, since $K$ contains an orientation-reversing element, there exists a word 
$w=w(c_{10},\ldots,c_{1{s-1}})$ of odd length in the generators $c_{10},\ldots,c_{1{s-1}}$ 
of $\Gamma$ that lies in $K$. (Note here that the only other canonical generator $e_1$ is necessarily trivial.) 
Next, if $c_{1{i_0}}$ is the first letter of this this word $w$, then because the image of $w$ in $G$ is trivial, 
the image $C_{1{i_0}}$ of $c_{1{i_0}}$ can be expressed in terms of the pairwise products 
$C_{1i}C_{1j}$, and hence in terms of the elements $X_1,\dots,X_s$, 
and it follows that $G$ is generated by $\{X_1,\ldots,X_s\}$.  
Now we can take $Z=C_0$, and then an easy induction shows that 
$C_i=ZX_1\cdots X_i$ for $1\le i < s$, and so the conclusions about the group $G$ all hold.

Conversely, suppose that some finite group $G=G_{[n_1,\ldots,n_s]}$ satisfies 
the conditions given in the statement of the theorem. 
Then we may define a homomorphism $\theta:\Gamma\to G$ by setting 
$\theta(c_{10})=Z$ \ and \ $\theta(c_{1i})=ZX_1\cdots X_i$ for $1\le i < s$ 
(and $\theta(e_1) = 1$). 
In particular, the image $\langle C_{j-1},C_j \rangle$ of $\langle c_{1{j-1}},c_{1j} \rangle$ 
is dihedral of order $2n_j$, for $1 \le j < s$, and similarly, the image of $\langle C_{s-1},C_0 \rangle$ 
of $\langle c_{1{s-1}},c_{10} \rangle$ is dihedral of order $2n_s$, 
and hence the finite index normal subgroup $\ker\theta$ of $\Gamma$ is torsion-free.
Finally, the involution $Z$ is expressible as a word $w(X_1,\ldots,X_{s-1})$ in the 
generators $X_1,\ldots,X_{s-1}$ of $G$, so that $Z w(X_1,\ldots,X_{s-1}) = 1$ in $G$, 
and then since $\theta(c_0w(c_0c_1,\ldots,c_{s-2}c_{s-1})) = Z w(X_1,\ldots,X_{s-1})$,
it follows that $c_0w(c_0c_1,\ldots,c_{s-2}c_{s-1})$ is an orientation-reversing element 
lying in $\ker\theta.$
\end{proof}

For the rest of this section, we explain how there exist very many finite groups $G$ that have 
the property stated in Proposition \ref{(n1,...,ns)}. 

\subsection{Examples coming from non-orientable regular maps and hypermaps} 
\label{nonomaps}

First, consider the signature $(0;+;[-];\{(2,k,m)\})$, with $s = 3$. 
Every NEC group $\Gamma$ with such a signature is isomorphic to the ordinary $(2,k,m)$ 
triangle group, and hence its smooth homomorphic images occur as automorphism groups 
of regular maps of type $\{m,k\}$, as described in \cite{coxeter-moser} for example. 
Such a map $M$ lies on an orientable surface if and only if the image of the 
canonical Fuchsian subgroup $\Gamma^+$ has index $2$ in $\aut(M)$, in which case 
that image is the group $\aut^+(M)$ of all orientation-preserving automorphisms of $M$. 
On the other hand, $M$ lies on a non-orientable surface if and only if that image coincides with $\aut(M)$, 
which happens if and only if the kernel of the homomorphism from $\Gamma$ to $\aut(M)$ 
contains orientation-reversing elements. 
Accordingly, the family of non-orientable regular maps of given type $\{m,k\}$ 
provides us with torsion-free subgroups of finite index in $\Gamma$ giving a positive 
answer to the NEC group version of Fenchel's conjecture. 

Indeed there are well over 3000 such maps (with various types) on non-orientable surfaces 
of genus $4$ to $602$ (see \cite{conder.nonomaps}),   
and a way of constructing examples with specific types (and infinite families of types 
with one of $k$ and $m$ fixed and the other variable) was given in 1995 in \cite{conder-everitt}.  
 
Much more recently, it has been shown that there exists a regular map of type $\{m,k\}$ 
on a non-orientable surface of genus $p > 2$ for every pair of integers $k$ and $m$ such that 
$\frac{1}{k} + \frac{1}{m} < \frac{1}{2}$ (see \cite{asciak-conder-reade-siran}).
Hence we have a positive answer to the NEC group version of Fenchel's conjecture 
for all `hyperbolic' signatures $(0;+;[-];\{(2,k,m)\})$ for which  $\frac{1}{k} + \frac{1}{m} < \frac{1}{2}$.

(Incidentally, there are also such maps on the real projective plane, but of course in those cases 
the area of a fundamental region for $\Gamma$ would not be positive, so we can ignore them.)

\medskip

An obvious open question is whether or not this extends to all signatures 
$(0;+;[-];\{(k,l,m)\})$ for which  $\frac{1}{k} + \frac{1}{l} + \frac{1}{m} < 1$, 
via consideration of regular `hypermaps'. 
All examples of such hypermaps with genus $2$ to $202$ (up to isomorphism and triality) 
are given at \cite{conder.nonohypermaps}.

\medskip

It is also easy to construct examples with signature $(0;+;[-];\{(n,\overset{s}{\ldots },n)\})$ 
from non-orientable regular maps of type $\{s,2n\}$, for certain values of $n$ and $s$. 

\medskip

Take the NEC group $\Gamma$ with signature $(0;+;\{(2,s,2n)\})$, 
also known as the full $(2,s,2n)$ triangle group, with presentation 
$\langle\, c_0,c_1,c_2 \ | \ c_0^{\,2} = c_1^{\,2} = c_2^{\,2} = (c_0 c_1)^{\,2} = (c_1 c_2)^{\,s} 
= (c_2 c_0)^{\,2n} = 1 \, \rangle$, 
and let $G$ be the automorphism group of a non-orientable map $M$ of type $\{s,2n\}$.  
Then there exists an epimorphism $\phi :\Gamma \rightarrow G$, with torsion-free kernel $K = \ker\phi$,  
and by non-orientability, we know that $K$ contains orientation-reversing elements.  
Moreover, if $C_i = \phi(c_i)$ for $1\le i \le 3$, then $G$ is generated 
by $C_{0}C_{1}$ and $C_{1}C_{2}$, for the same reason.

Now let $N$ be the subgroup of $G$ generated by the involutions 
$C_j^{\,\prime} = (C_{1}C_{2})^{-j}C_{0}(C_{1}C_{2})^{j}$ for $0 \le j < s$. 
This is normalised by $C_{1}C_{2}$, conjugation by which permutes its generators, 
and is also normalised by $C_{0}C_{1} = C_{1}C_{0}$, because 
$$
C_{0}C_{1} (C_{1}C_{2})^{-j}C_{0}(C_{1}C_{2})^{j} C_{1}C_{0} 
= C_{0} (C_{1}C_{2})^{j}C_{1}C_{0}C_{1}(C_{1}C_{2})^{-j} C_{0} 
= C_{0} (C_{1}C_{2})^{j} C_{0} (C_{1}C_{2})^{-j} C_{0},
$$
which lies in $N$ (for $0 \le j < s$).  
Hence $N$ is normalised by $\langle C_{0}C_{1}, C_{1}C_{2} \rangle = \langle C_{0},C_{1},C_{2} \rangle = G$.  
Furthermore, 
$C_0^{\,\prime} C_1^{\,\prime} = C_0 \, (C_{2}C_{1})C_{0}(C_{1}C_{2})= (C_0 C_2)^2$, 
which has order $n$, and it follows that  
$C_j^{\,\prime} C_{j+1}^{\,\prime} 
= (C_{1}C_{2})^{-j}C_0^{\,\prime} C_1^{\,\prime}(C_{1}C_{2})^{j}$ 
has order $n$ for $1 \le j < s$ as well. 

The quotient $G/N$ is generated by the cosets $NC_1$ and $NC_2$, and hence is a factor group 
of the dihedral group ${\rm D}_s$ of order $2s$, and if the order of $G/N$ is actually $2s$,  
then $\phi ^{-1}(N)$ is a normal subgroup $\Delta$ of index $2s$ in $\Gamma$,  
with signature $(0;+;[-];\{(n,\overset{s}{\ldots },n)\})$. 
In particular, in that case $\Delta$ is an NEC group with canonical generating set 
$\{c_{0}^{\,\prime },...,c_{s-1}^{\,\prime} \}$  
such that $\phi(c_{j}^{\,\prime}) = C_j^{\,\prime}$ 
for $0 \le j < s$, 
and then the kernel of the corresponding homomorphism $\phi^{\prime} :\Delta \rightarrow N$ is equal to $\ker\phi = K$. Thus $K$ is a torsion-free normal subgroup of finite index $|N|$ in $\Delta$, containing 
orientation-reversing elements.

\medskip

This approach works for the infinite family of non-orientable regular maps of type $\{3t,4\}$ 
given in \cite{conder-everitt}, so gives a positive answer for the NEC groups with 
signature $(0;+;[-];\{(2,\overset{s}{\ldots },2)\})$ for every $s$ divisible by $3$.  
Other such non-orientable maps of small genus give positive answers for 
signatures $(0;+;[-];\{(n,\overset{s}{\ldots },n)\})$ 
for pairs $(s,n)$ such as $(3,4)$, $(3,5)$, $(3,6)$, $(4,5)$, $(5,2)$, $(5,3)$ and $(5,4)$. 

\subsection{Examples coming from non-orientable regular polytopes} 

Another approach involves the automorphism groups of abstract regular polytopes,  
which are generalisations of both geometric polytopes and regular maps, 
as described in detail in \cite{mcmullen-schulte}). 

\medskip

If ${\cal P}$ is a regular polytope of rank $n$ with \textit{Schl{\" a}fli} type $\{k_1,\dots,k_{n-1}\}$, 
then its automorphism group is a quotient of the Coxeter group $[k_1,\dots,k_{n-1}]$,  
which has a defining presentation in terms of $n$ generators $x_1,\dots,x_n$,  
subject to the relations 
\\[+6pt] 
${}$ \ \ $x_i^{\,2} = 1\,$ for $1 \le i \le n$, \quad 
$(x_{i} x_{i+1})^{k_i} = 1\,$ for $1 \le i < n$, \ and \ 
$(x_{i} x_{j})^2 = 1\,$ for $1 \le i < j \le n$.
\\[+6pt] 
The images of the generators $x_i$ in $\aut({\cal P})$ must also satisfy certain other requirements, 
and notably something called the Intersection Condition (motivated by geometric considerations), 
and then the regular polytope ${\cal P}$ may be called \textit{orientable} or \textit{non-orientable}, 
depending on whether the image in $\aut({\cal P})$ of the index $2$ subgroup of the Coxeter group 
generated by the elements $x_{i} x_{j}$ has index $2$ or $1$. 

In particular, for any such polytope ${\cal P}$, the group $\aut({\cal P})$ is a quotient 
of the NEC group $\Gamma$ with signature $(0;+;[-];\{(k_1,k_2,\ldots,k_{n-1})\})$, 
via some (smooth) epimorphism $\theta: \Gamma \to \aut({\cal P})$ taking each 
generating reflection $c_i$ ($= c_{1i}$) of $\Gamma$ to $x_{i+1}$, for $0 \le i < n$,  
and preserving the orders of those $c_i$ and their pairwise products $c_{i}c_{i+1}$ 
(for $0 \le i \le n-2$) and $c_{n-1}c_0$. 
Moreover, the kernel of $\theta$ contains orientation-reversing elements 
if and only of the polytope ${\cal P}$ is non-orientable. 

Hence we have a positive answer to the Fenchel conjecture for a NEC group with 
signature $(0;+;[-];\{(k_1,k_2,\ldots,k_{n-1})\})$ whenever there exists a non-orientable 
regular polytope with Schl{\" a}fli type $\{k_1,\dots,k_{n-1}\}$. 

Examples are plentiful, and include not only a large number of the regular 
polytopes of ranks $3$ to $5$ listed at \cite{conder.regpolytopes}, but also an 
infinite family, consisting of a non-orientable regular polytope of rank $n$ 
with Schl{\" a}fli type $\{3,4,4\dots,4\}$ for each $n \ge 3$.  
This family starts with the examples of ranks $3$ to $5$ at \cite{conder.regpolytopes}, 
with $24$, $192$ and $768$ flags, respectively, and the remainder can be constructed 
as a `mix' (see \cite[Chapter 7]{mcmullen-schulte}) of the third one of those small examples 
with a `minimal' regular polytope of rank $n-3$ of type $\{4,\dots, 4\}$ as described in \cite{conder.smallestregpolytopes}. 

\subsection{Other examples coming from non-orientable regular maps} 

Let $\Gamma$ be an NEC group with signature $(0;+;\{(2,m,2n)\})$, 
as in Subsection~\ref{nonomaps}, but with $s$ replaced by $m$.
Then the canonical Fuchsian subgroup $\Gamma^+$ of index $2$ in $\Gamma$ 
is generated by $c_{10}c_{11}$, $c_{11}c_{12}$ and $c_{12}c_{10}$, but also $\Gamma$ 
has at least two other subgroups of index $2$, including the subgroup  
generated by $x = c_{11}c_{12}$ and $c = c_{10}$. The latter subgroup $\Lambda$ is an NEC group 
with signature $(0;+;[m];\{(n)\})$, and presentation $\langle\, x,c \ | \ x^m = c^2 = [x,c]^n = 1 \,\rangle$. 
In particular, $x$ preserves orientation, while $c$ reverses it. 
Similarly, the intersection $\Gamma^+ \cap \Lambda$ is a normal subgroup of index $4$ in $\Gamma$, 
generated by the element $x$ and its conjugate $x^c$, both of which preserve orientation, 
and have order $m$, with $x^{-1}(x^c) = [x,c]$ having order $n$.  
(In fact $\Gamma^+ \cap \Lambda$ is isomorphic to the ordinary $(m,m,n)$ triangle group.) 

Next, let $G$ be the automorphism group of a non-orientable map $M$ of type $\{m,2n\}$.  
Then there exists a homomorphism $\psi :\Gamma \rightarrow G$, such that $K = \ker\psi$ is 
torsion-free and contains orientation-reversing elements, expressible as words of odd length 
in the generators $c_{1j}$ of $\Gamma$. 

Now suppose $G = \aut(M)$ is perfect, so that $G = [G,G]$, which means that $G$ has 
trivial abelianisation.  Then $\psi(\Gamma^+ \cap \Lambda)$ cannot have index $2$ or $4$ in $G$, 
so must have index $1$ in $G$, and it follows that $G = \psi(\Lambda) = \psi(\Gamma^+ \cap \Lambda)$.
In particular, $\psi(c)$ is expressible as a word in the generators $\psi(x)$ and $\psi(x^c)$ 
of $\psi(\Gamma^+ \cap \Lambda) = G$, and so there exists an element $w \in K = \ker\psi$ 
expressible as a word in $x$ and $c$ containing an odd number of occurrences of $c$. 

Hence there exists a homomorphism $\xi : \Lambda \rightarrow G$, 
such that $\ker\xi = K \cap \Lambda$ is a torsion-free subgroup of finite index $|G$ in $\Lambda$ 
and contains an orientation-reversing element. 
In other words, the NEC form of Fenchel's conjecture holds for the group $\Lambda$ 
with signature $(0;+;[m];\{(n)\})$. 

\medskip

Of course a natural question to ask is how often this can happen. 

Well, in the proof in \cite{asciak-conder-reade-siran} that there exists a non-orientable 
regular map $M$ of (hyperbolic) type $\{m,k\}$ for every pair of integers $k$ and $m$ 
such that $\frac{1}{k} + \frac{1}{m} < \frac{1}{2}$, there are many cases to consider, 
but almost all of them involve a construction in which the group $G = \aut(M)$ is either 
$\PSL(2,q)$ for some prime-power $q > 3$, or the alternating group ${\rm A}_n$ for some $n \ge 5$. 
Actually it would be reasonable to conjecture that there always exists a non-orientable 
regular map $M$ of type $\{m,k\}$ with $\aut(M)$ isomorphic to $\PSL(2,q)$ for some such $q$, 
or isomorphic to ${\rm A}_n$ for some such $n$, or even that there exists one of each kind. 

If such a conjecture is valid, then since these groups $\PSL(2,q)$ and ${\rm A}_n$ are simple 
and hence perfect, the NEC form of Fenchel's conjecture will hold for the group $\Lambda$ 
with signature $(0;+;[m];\{(n)\})$ whenever $0 < \mu(\Lambda) = \frac{1}{2} - \frac{1}{m} - \frac{1}{2n}$, 
that is, whenever $\frac{1}{m} + \frac{1}{2n} < \frac{1}{2}$. 
But proving that lies beyond the scope of this paper. 


\end{document}